\newcounter{savesection}
\newcounter{apdxsection}
\renewcommand\appendix{\par
	\setcounter{savesection}{\value{section}}%
	\setcounter{section}{\value{apdxsection}}%
	\setcounter{subsection}{0}
	\gdef\thesection{Appendix \@Alph\c@section}}
\newcommand\unappendix{\par
	\setcounter{apdxsection}{\value{section}}%
	\setcounter{section}{\value{savesection}}%
	\setcounter{subsection}{0}%
	\gdef\thesection{\@arabic\c@section}}
\newtheorem*{theorem*}{Theorem}
\newtheorem{theorem}{Theorem}[section]
\newtheorem{lemma}[theorem]{Lemma}
\newtheorem{remark}[theorem]{Remark}
\newtheorem*{remark*}{Remark}
\newtheorem*{lemma*}{Lemma}
\newtheorem{example}[theorem]{Example}
\newtheorem*{example*}{Example}
\newcommand{\norm}[1]{\left\lVert#1\right\rVert}
\begin{document}
\title[Composite Quadrature Methods for Conv. Integrals]{Composite Quadrature Methods for Weakly Singular Convolution Integrals}
\author[Wesley Davis]{W. Davis}
\address{Department of Mathematics and Statistics, Old Dominion University, Norfolk, VA.}
\email{wdavi002@odu.edu}
\author[Richard Noren]{R. Noren}
\address{Department of Mathematics and Statistics, Old Dominion University, Norfolk, VA.}
\email{rnoren@odu.edu}

\begin{abstract}
The well-known Caputo fractional derivative and the corresponding Caputo fractional integral occur naturally in many equations that model physical phenomena under inhomogeneous media. The relationship between the two fractional terms can be readily obtained by applying the Laplace transform to a given equation. We seek to numerically approximate Caputo fractional integrals using a Taylor series expansion for convolution integrals. This naturally extends into being able to approximate convolution integrals for a wider class of convolution integral kernels $K(t-s)$. One of the main advantages under this approach is the ability to numerically approximate weakly singular kernels, which fail to converge under traditional quadrature methods. We provide stability and convergence analysis for these composite quadratures, which offer optimal convergence for approximating functions in $C^{\gamma}[0,T]$, where $\alpha \leq \gamma \leq 5$ and $0<\alpha < 1$. For the order $\gamma = 1,2,3,4,5$ scheme, the resulting approximation is $O(\tau^{\gamma})$ accurate, where $\tau$ is the size of the partition of the time domain. By instead utilizing a fractional Taylor series expansion, we are able to obtain for $\gamma \in (0,5)-\{1,2,3,4\}$ order scheme, which yields an approximation of $O(\tau^{\gamma})$ with a constant dependent on the kernel function which improves the order of convergence. This allows for a far wider class of functions to be approximated, and by strengthening the regularity assumption, we are able to obtain more accurate results. General convolution integrals exhibit these results up to $\gamma = 2$ without the assumption of $K$ being decreasing. Finally, some numerical examples are presented, which validate our findings.  \\
KEYWORDS: Integral Equations; Fractional Derivative; Composite Quadrature; Numerical Analysis; Numerical Convergence.
\end{abstract}
\maketitle
\section{Introduction}
We begin by defining the Caputo fractional time-derivative [9,10] of a given function $f(t)$ as
$$^{C}_{0}D_t^{\alpha}f(t) = \dfrac{1}{\Gamma(1-\alpha)}\int_0^t \dfrac{df(s)}{ds}(t-s)^{-\alpha}\, ds,\quad 0<\alpha <1$$
which is a fractional derivative of order $\alpha$.
In \cite{DavisX}, the use of the Laplace transform was applied to a Caputo fractional derivative term to obtain the fractional integral term
\begin{equation}\label{CapFI}
^{C}_{0}I^{\alpha}_t f(t)  = \dfrac{1}{\Gamma(\alpha)}\int_0^t (t-s)^{\alpha-1}f(s) \, ds,
\end{equation}
which was studied numerically and convergent schemes were developed for this integral inspired by the works presented in [9-13], [15-16]. The integral \eqref{CapFI} can be expressed as the convolution $a_{\alpha}*f$, where $a_{\alpha}(t)= t^{\alpha-1}$. 
Our work will examine and derive numerical schemes to discretize integrals of the form \eqref{CapFI}, which has numerous engineering and physics applications, see \cite{Stynes2017}, \cite{Sun2006} and \cite{Zhang2014}. One of the major advantages of applying the Laplace Transform to a fractional derivative term, as seen in \cite{DavisX}, is the ability to preserve the same assumption of regularity as required for fractional derivative discretizations and recovering an extra $O(\tau_k^{\alpha})$ order of accuracy, where $\tau_k$ denotes the time step size. Further, we now have the ability to relax the regularity assumption from requiring the objective function $f(t)\in C^2[0,T]$ under the usual L1-method to instead be any $f(t) \in C^{\gamma}[0,T]$, where $\alpha \leq \gamma \leq 5$ and $0< \alpha < 1$. This is achieved by a usual Taylor series expansion to obtain convergence results for whole number values of $\gamma$, and by utilizing a fractional Taylor series expansion to approximate functions with a fractional order of regularity, see \cite{Usero2008}. By requiring more regularity, we are able to obtain a higher order of convergence, as seen in Theorems 3.6 and 3.7. This method naturally generalizes to any convolution type-quadrature where the kernel function $K$ is positive, decreasing, and satisfies $K\in L^1[0,T]$, as seen in Theorems 3.4 and 3.5.
\par The remainder of the paper is organized as follows. Section 2 will provide discretizations for fractional integrals of the above form, and a general scheme is established for fractional integrals of other forms based on the integral kernel. We obtain general schemes of orders up to $5^{th}$ order of accuracy. Section 3 establishes all of the necessary consistency, stability, and convergence results for each of these schemes, in addition to a discussion of the implementation of the schemes. We prove optimal order of convergence of our stable schemes, where the order is at least 3. The instability of schemes of order 6 and above are presented as well. The main convergence results are featured in Theorems 3.4 through 3.6. Section 4 presents two fractional integral equations as numerical examples that validate our findings. Future works will consider the application of these methods to fractional order diffusion processes based on the validity of the schemes and the order of accuracy they can provide.
\section{Discretized numerical schemes}
In order to discretize the Caputo fractional integral \eqref{CapFI}, we must consider the Taylor expansion for a given function $f(t)$ at the arbitrary point $s\in [0,T)$, the usual time interval over which the integral is considered. That is, the Taylor expansion centered at the point $t_k$ for $f(s)$ on $[0,T)$, given $s\in [0,T)$, is 
\begin{equation}
f(s) = f(t_k) + (s-t_k)f'(t_k)+\dfrac{(s-t_k)^2}{2!}f''(t_k) + \dfrac{(s-t_k)^3}{3!}f'''(t_k)+... 
\end{equation} 
Define $\tau_k = t_k - t_{k-1}$ and let $t_0 = 0$ and $t_N = T$ such that $\{t_k\}_{k=0}^{N} \in [0,T]$. From the above, similar Taylor expansions centered at any given $t_k$ can be constructed for each of the previous points $t_{k-1},t_{k-2},...,t_1,t_0\in [0,T-\tau_k]$. That is, 
\begin{align}
f(t_k) &= f(t_k)\\
f(t_{k-1}) &= f(t_k)-\tau_k f'(t_k) + \dfrac{\tau_k^2}{2!}f''(t_k)- \dfrac{\tau_k^3}{3!}f'''(t_k) + O(\tau_k^4)\\
f(t_{k-2}) &= f(t_k)-2\tau_k f'(t_k) + \dfrac{(2\tau_k)^2}{2!}f''(t_k)- \dfrac{(2\tau_k)^3}{3!}f'''(t_k) + O(\tau_k^4)\\
f(t_{k-3}) &= f(t_k)-3\tau_k f'(t_k) + \dfrac{(3\tau_k)^2}{2!}f''(t_k)- \dfrac{(3\tau_k)^3}{3!}f'''(t_k) + O(\tau_k^4)\\
&...\nonumber\\
f(t_0) &= f(t_k)-k\tau_k f'(t_k) + \dfrac{(k\tau_k)^2}{2!}f''(t_k)- \dfrac{(k\tau_k)^3}{3!}f'''(t_k) + O(\tau_k^4)
\end{align}
Thus, the $j$-th order approximation of $f(s)$ for any $s\in[0,T)$ at the point $t_k \in [0,T)$ requires we solve the linear system of equations:
\begin{align}
\sum_{i=0}^{j-1} c^k_i f(t_{k-i}) &= f(s)\label{s1}\\ 
&= \sum_{i=0}^{j-1} \dfrac{(s-t_k)^i}{i!}f^{(i)}(t_k) + O((s-t_k)^j).\label{s2}
\end{align}
Each of the terms $f(t_{k-i})$ are then replaced by their Taylor expansions about the point $t_k$ and then solved, after neglecting the higher order terms of $O(\tau_k^j)$ and $O((s-t_k)^j)$. For example, a second order approximation of $f(s)$ is provided in \cite{DavisX}, Theorem 3.2, which can be recovered by solving the equation
\begin{align*}
c^k_0 f(t_k)+ c^k_1\left(f(t_k)-\tau_k f'(t_k) \right) &= f(t_k) + (s-t_k)f'(t_k).
\end{align*}
This equation can be rewritten as a system of equations
\begin{align*}
c^k_0 + c^k_1 &= 1\\
-c^k_1 \tau_k &= (s-t_k),
\end{align*}
Which yields the solution $c^k_1 = \dfrac{t_k-s}{\tau_k}$, $c^k_0 = 1-\dfrac{t_k-s}{\tau_k}$. Therefore, we may numerically approximate the integral as seen in \cite{DavisX} using $c^k_0$ and $c^k_1$ as solved for above:
\begin{align*}
\int_0^{t_n} \dfrac{(t_n-s)^{\alpha-1}}{\Gamma(\alpha)}f(s)\, ds &= \sum_{k=1}^{n} \int_{t_{k-1}}^{t_{k}} \dfrac{(t_n-s)^{\alpha-1}}{\Gamma(\alpha)}f(s)\, ds\\
&\approx \sum_{k=1}^{n} \int_{t_{k-1}}^{t_{k}} \dfrac{(t_n-s)^{\alpha-1}}{\Gamma(\alpha)}\left(c^k_0f(t_k)+c^k_1f(t_{k-1})\right)\, ds,
\end{align*}
which recovers the equation that was studied in greater detail in \cite{DavisX}. We remark that under this construction, we satisfy the condition $s\in [t_{k-1},t_k]$. This directly implies that the coefficients $c_0$ and $c_1$ presented above are positive. We now provide the values of the coefficients for each scheme up to 4th order accuracy. Higher order schemes can be derived using the generalized system of equations  \eqref{s1} and \eqref{s2}. We remark that in general, $c_i = c_i(s)$ for each i.\\
\\
\textbf{First order accurate:}
\begin{align*}
c^k_0&=1,\\
f(s) &= f(t_k) + O(\tau_k).
\end{align*} 
\\
\textbf{Second order accurate:}
\begin{align*}
c^k_0 &= 1-\dfrac{t_k-s}{\tau_k},\, c^k_1 = \dfrac{t_k-s}{\tau_k},\\
f(s) &= \left(1-\dfrac{t_k-s}{\tau_k}\right)f(t_k) +  \left(\dfrac{t_k-s}{\tau_k}\right)f(t_{k-1}) +O(\tau_k^2).
\end{align*}
\\
\textbf{Third order accurate:}
\begin{align*}
c^k_0 &= \dfrac{(\tau_k+s-t_k)(2\tau_k+s-t_k)}{2\tau_k^2},\, c^k_1 = \dfrac{(t_k-s)(2\tau_k+s-t_k)}{\tau_k^2},\\ \, c^k_2 &= \dfrac{(s-t_k)(\tau_k+s-t_k)}{2\tau_k^2}\\
f(s) &= \left( \dfrac{(\tau_k+s-t_k)(2\tau_k+s-t_k)}{2\tau_k^2}\right)f(t_k) + \left(\dfrac{(t_k-s)(2\tau_k+s-t_k)}{\tau_k^2}\right)f(t_{k-1}) \\&+  \left(\dfrac{(s-t_k)(\tau_k+s-t_k)}{2\tau_k^2}\right)f(t_{k-2})+ O(\tau_k^3).
\end{align*}
\\
\textbf{Fourth order accurate:}
\begin{align*}
c^k_0 &= \dfrac{(\tau_k+s-t_k)(2\tau_k+s-t_k)(3\tau_k+s-t_k)}{6\tau_k^3},\\ c^k_1 &= \dfrac{(t_k-s)(2\tau_k+s-t_k)(3\tau_k+s-t_k)}{2\tau_k^3},\\ \, c^k_2 &= \dfrac{(s-t_k)(\tau_k+s-t_k)(3\tau_k+s-t_k)}{2\tau_k^3},\\ c^k_3 &= \dfrac{(t_k-s)(\tau_k+s-t_k)(2\tau_k+s-t_k)}{6\tau_k^3}\\
f(s) &= \left(\dfrac{(\tau_k+s-t_k)(2\tau_k+s-t_k)(3\tau_k+s-t_k)}{6\tau_k^3} \right)f(t_k) \\&+ \left( \dfrac{(t_k-s)(2\tau_k+s-t_k)(3\tau_k+s-t_k)}{2\tau_k^3}\right)f(t_{k-1}) \\&+  \left(\dfrac{(s-t_k)(\tau_k+s-t_k)(3\tau_k+s-t_k)}{2\tau_k^3}\right)f(t_{k-2})\\&+\left(\dfrac{(t_k-s)(\tau_k+s-t_k)(2\tau_k+s-t_k)}{6\tau_k^3}\right)f(t_{k-3}) + O(\tau_k^4).
\end{align*}
We present the generalized equation to solve for a n-th order accurate approximation to $f(s)$. The above equations may be rewritten by the matrix equation
\begin{equation}\label{ME1}
\begin{bmatrix}
1 & 1 & 1 & ... & 1\\
0 & -\tau_k & -2\tau_k & ... & -(n-1)\tau_k\\
0 & (-\tau_k)^2 & (-2\tau_k)^2 & ... & (-(n-1)\tau_k)^2\\
& &...\\
0 & (-\tau_k)^{n-1} & (-2\tau_k)^{n-1} & ... & (-(n-1)\tau_k)^{n-1}
\end{bmatrix}
\begin{bmatrix}
c^k_0\\
c^k_1\\
c^k_2\\
...\\
c^k_{n-1}
\end{bmatrix}
= 
\begin{bmatrix}
1\\
(s-t_k)\\
(s-t_k)^2\\
...\\
(s-t_k)^{n-1}
\end{bmatrix}.
\end{equation}
The first $n \times n$ matrix is, in fact, the transpose of the usual Vandermonde matrix \cite{Turner1966} where each entry, other than the first row of ones, is a multiple of the time partition $\tau_k$. That is, we set $x_1 = 0$, $x_2 = -\tau_k$, $x_3 = -2\tau_k$, ... $x_n = -(n-1)\tau_k$ so that
\begin{equation}\label{ME2}
V^T_{\tau_k}\vec{c^k_n} = \vec{y^k_n},
\end{equation}
where $V^T_{\tau_k}$ is the Vandermonde matrix with each $x_n$ expressed in terms of $\tau_k$ as above, and
$$
\vec{c^k_n} = 
\begin{bmatrix}
c^k_0\\
c^k_1\\
c^k_2\\
...\\
c^k_{n-1}
\end{bmatrix}, \,
\vec{y^k_n} = 
\begin{bmatrix}
1\\
(s-t_k)\\
(s-t_k)^2\\
...\\
(s-t_k)^{n-1}
\end{bmatrix}.
$$
The following lemma asserts the existence of any general n-th order numerical scheme. Since we have 
\begin{align*}
\det(V^T_{\tau_k})=\det(V_{\tau_k})&=\prod_{1\leq i < j \leq n}(x_j-x_i)\\
&= \prod_{1\leq i < j \leq n}(i-j)\tau_k \neq 0,
\end{align*}
provided that $\tau_k>0$ which directly implies that the matrix $V^T_{\tau_k}$ is invertible under this condition. The following lemma is immediate from the above. 
\begin{lemma}\label{L1}
Let $\tau_k>0$. Then, \eqref{ME2} has a unique solution $\vec{c^k_n}$ for each $n\leq N\in \mathbb{N}$.
\end{lemma}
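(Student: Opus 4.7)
The plan is to simply leverage the classical fact that a Vandermonde matrix with pairwise distinct nodes is invertible, since the authors have already essentially computed the relevant determinant in the paragraph immediately preceding the statement of the lemma. So my proof will be a short argument that organizes these observations and concludes with the standard linear-algebra consequence.

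First I would fix $n \leq N$ and recall that \eqref{ME2} is equivalent to the matrix equation \eqref{ME1}. The coefficient matrix $V^T_{\tau_k}$ is the transpose of the Vandermonde matrix associated with the nodes $x_i = -(i-1)\tau_k$ for $i=1,\dots,n$. Since the determinant is invariant under transposition, I would write
\[
\det(V^T_{\tau_k}) = \det(V_{\tau_k}) = \prod_{1\leq i<j\leq n}(x_j - x_i) = \prod_{1\leq i<j\leq n}(i-j)\,\tau_k.
\]
Next I would verify that each factor $(i-j)\tau_k$ is nonzero: by hypothesis $\tau_k>0$, and for $1\leq i<j\leq n$ we have $i-j \neq 0$. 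Therefore the product is nonzero, so $V^T_{\tau_k}$ is nonsingular.

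Finally, because $V^T_{\tau_k}$ is an $n\times n$ nonsingular matrix over $\mathbb{R}$, the inverse $(V^T_{\tau_k})^{-1}$ exists, and one obtains the unique solution $\vec{c^k_n} = (V^T_{\tau_k})^{-1}\vec{y^k_n}$ for every right-hand side $\vec{y^k_n}$, completing the proof.

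There is no real obstacle here, the statement follows immediately from the nonvanishing of the Vandermonde determinant given distinct nodes; the only point worth emphasizing is that the hypothesis $\tau_k>0$ is precisely what is needed to ensure the $n$ nodes $0,-\tau_k,\dots,-(n-1)\tau_k$ are pairwise distinct.
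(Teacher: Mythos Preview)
Your proposal is correct and follows essentially the same approach as the paper: the authors compute $\det(V^T_{\tau_k})=\prod_{1\leq i<j\leq n}(i-j)\tau_k\neq 0$ in the paragraph immediately preceding the lemma and then declare the lemma immediate from this nonvanishing. Your write-up is simply a slightly more explicit version of that same argument, concluding with $\vec{c^k_n}=(V^T_{\tau_k})^{-1}\vec{y^k_n}$.
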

We now compute the unique solution based on the previous lemma. From \cite{Turner1966}, we can establish the generalized inverse of the Vandermonde matrix. 
\begin{theorem}\label{Thm1}
Let $\tau_k>0$. Then, \eqref{ME2} has a unique solution $\vec{c^k_n}$ for each $n\leq N\in \mathbb{N}$, with the solution 
\begin{align}\label{c_j}
\left[c^k_i\right]=\begin{cases}
&\displaystyle{\sum_{1\leq j\leq n}} (s-t_k)^{j-1} (-1)^{n-i} \left(\dfrac{\displaystyle{\sum_{\substack{1\leq m_1, m_2, ... , m_{n-i},\\m_1,...,m_{n-1}\neq j}}} x_{m_1}...x_{m_{n-i}}}{\displaystyle{\prod_{1\leq i < j \leq n}\left(\tau_k (i-j)\right)}}\right):1\leq i < n \\
\\
&\displaystyle{\sum_{1\leq j\leq n} (s-t_k)^{j-1} \dfrac{1}{\displaystyle{\prod_{1\leq i < j \leq n}\left(\tau_k (i-j)\right)}} \quad \quad \quad \quad: i=n}
\end{cases}
\end{align}
\end{theorem}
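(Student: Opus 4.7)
The plan is to reduce Theorem \ref{Thm1} to a direct application of the classical inverse-Vandermonde formula from \cite{Turner1966} combined with the invertibility already established in Lemma \ref{L1}. Uniqueness is immediate: the Vandermonde determinant computation displayed just before the statement gives $\det(V^T_{\tau_k})=\prod_{1\leq i<j\leq n}(i-j)\tau_k\neq 0$ whenever $\tau_k>0$, so $V^T_{\tau_k}$ is invertible and the vector $\vec{c^k_n}=(V^T_{\tau_k})^{-1}\vec{y^k_n}$ exists and is unique. This already gives the first half of the statement, leaving only the derivation of the explicit coefficients.

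For the explicit form, I would invoke the formula of Turner for the entries of $V^{-1}$. With nodes $x_1=0$, $x_2=-\tau_k$, $\ldots$, $x_n=-(n-1)\tau_k$, the $(i,j)$ entry of the inverse of the Vandermonde matrix is, up to a sign $(-1)^{n-i}$, a ratio whose numerator is the elementary symmetric polynomial of degree $n-i$ in the nodes $\{x_m:m\neq j\}$ and whose denominator is $\prod_{m\neq j}(x_j-x_m)$. Taking the transpose interchanges the roles of $i$ and $j$, and substituting the arithmetic-progression nodes converts the denominator into $\prod_{1\leq i<j\leq n}\tau_k(i-j)$, matching the denominator displayed in \eqref{c_j}.

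The third step is just a matrix-vector multiplication: applying $(V^T_{\tau_k})^{-1}$ to the right-hand side $\vec{y^k_n}=(1,\,s-t_k,\,\ldots,\,(s-t_k)^{n-1})^T$ produces the inner sum over $1\leq j\leq n$ of $(s-t_k)^{j-1}$ against the $(i,j)$-entry of $(V^T_{\tau_k})^{-1}$, which is exactly the expression listed for $c^k_i$ in \eqref{c_j}. The bottom case $i=n$ corresponds to $n-i=0$, in which the elementary symmetric polynomial is the empty product $1$, reducing to the simpler formula written in the second branch of \eqref{c_j}.

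I expect the only real obstacle to be bookkeeping: aligning the row/column indexing of \cite{Turner1966} with the one used here (the transpose swaps $i$ and $j$), and keeping the sign pattern $(-1)^{n-i}$ consistent with the convention in the statement. Once that accounting is performed, no additional computation is needed; \eqref{c_j} is just the componentwise rewriting of $(V^T_{\tau_k})^{-1}\vec{y^k_n}$ with $x_m=-(m-1)\tau_k$, and the claim follows.
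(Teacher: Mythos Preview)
Your proposal is correct and follows essentially the same approach as the paper's own proof: invoke Lemma~\ref{L1} for invertibility, quote Turner's formula for the entries of $(V^T_{\tau_k})^{-1}$, and read off $c^k_i$ as the $i$-th component of $(V^T_{\tau_k})^{-1}\vec{y^k_n}$. The paper's proof is in fact slightly terser than yours, omitting the discussion of the transpose bookkeeping and the empty-product interpretation of the $i=n$ case that you helpfully spell out.
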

\begin{proof}
From Lemma \ref{L1}, we may invert the matrix $V^T_{\tau_k}$ to obtain the solution
$$\vec{c^k_n} = (V^T_{\tau_k})^{-1}\vec{y^k_n},$$
where then from \cite{Turner1966}, each entry of $(V^T_{\tau_k})^{-1} = \left[v_{ij}\right]$, $1\leq i,j\leq n$ is calculated by
$$v_{ij} =  \begin{cases}
&(-1)^{n-i} \left(\dfrac{\displaystyle{\sum_{\substack{1\leq m_1, m_2, ... , m_{n-i}\\m_1,...,m_{n-1}\neq j}} x_{m_1}...x_{m_{n-i}}}}{\displaystyle{\prod_{1\leq i < j \leq n}\left(\tau_k (i-j)\right)}}\right):1\leq i < n \\
\\
&\dfrac{1}{\displaystyle{\prod_{1\leq i < j \leq n}\left(\tau_k (i-j)\right)}} \quad \quad \quad \quad: k=n
\end{cases}$$
so we may solve component-wise to find each entry of $\vec{c_n} =\left[c_j\right]$, $1\leq j \leq n$, from
$$
(V^T_{\tau_k})^{-1}\vec{y^k_n} = \sum_{1\leq j\leq n} v_{ij}y^k_j,
$$
where $\vec{y_n} =\left[y_j\right]$, $1\leq j \leq n$. Thus,
\begin{align}
&\left[c^k_i\right]=(V^T_{\tau_k})^{-1}\vec{y^k_n} = \sum_{j} v_{ij}y^k_j\\ 
&= \begin{cases}
&\sum_{1\leq j\leq n} (s-t_k)^{j-1} (-1)^{n-i} \left(\dfrac{\displaystyle{\sum_{\substack{1\leq m_1, m_2, ... , m_{n-i}\\m_1,...,m_{n-1}\neq j}} x_{m_1}...x_{m_{n-i}}}}{\displaystyle{\prod_{1\leq i < j \leq n}\left(\tau_k (i-j)\right)}}\right):1\leq i < n \\
\\
&\sum_{1\leq j\leq n} (s-t_k)^{j-1} \dfrac{1}{\prod_{1\leq i < j \leq n}\left(\tau_k (i-j)\right)}: \quad i=n.\label{c_i}
\end{cases}
\end{align}
\end{proof}
\begin{remark}
By utilizing the fractional Taylor series expansion instead for $f(s)$ on $[0,T)$, as discussed in \cite{Usero2008},  we may obtain similar results to those outlined in Theorem 2.2. This can further relax the regularity assumption to $f(s) \in C^{\alpha}[0,T]$.
\end{remark}
Using the fractional Taylor series expansion, we can assert that we have an $\alpha$ order scheme defined by the following:
 \\
\textbf{$\alpha$ order accurate:}
\begin{align*}
c^k_0&=1,\\
f(s) &= f(t_k) + O(\tau_k^{\alpha}).
\end{align*} 
\\
We now examine the consistency, stability, and convergence of these schemes based on the generalized scheme
\begin{equation}
f(s) = \sum_{i=0}^{n-1} c^k_i f(t_{k-i}) + O((s-t_k)^n).
\end{equation}
\section{Numerical Consistency, Stability, and Convergence}
\subsection{Numerical Consistency and Stability}
We motivate our discussion of stability by examining the results presented in \cite{Baker1993}. The quadrature studied in \cite{Baker1993} is of the form 
\begin{equation}\label{scheme}
\int_0^T \phi(s)\, ds = \tau \sum_{j=0}^N w_j \phi(j\tau) + O(\epsilon),
\end{equation}
which provides the error estimate given $\phi \in C^R[0,T]$ a sequence of constants, $\{c_l \}$, such that
$$\int_0^T \phi(s)\, ds - \tau \sum_{j=0}^N w_j \phi(j\tau) =  \sum_{l=\rho+1}^R h^l(r^l c_l)\{\phi^{(l-1)}(T)-\phi^{(l-1)}(0) \}+ O(h^R).$$
We will directly compare these results to the ones established in the previous section to prove stability and assert convergence. Our goal is to decompose the integrand into a convolution integral $\phi(s) = K(t_n - s)f(s)$, where we may relax the continuity assumptions on the kernel function $K(t_n-s)$. This goal is motivated in part from the results obtained in \cite{DavisX}, where we seek a generalization for the integral kernel. We begin by recalling some basic definitions for quadrature methods. From (1.15) of \cite{Baker1993}, a quadrature method is said to be consistent if it satisfies 
\begin{equation*}
\sum_{j=0}^{N} w_j = N
\end{equation*}
for the global quadrature of the integral \eqref{scheme}. We will relate \eqref{scheme} and a generalization of the results provided in \cite{DavisX}.
\begin{lemma}
Let $0\leq s\leq t_n$ for any prescribed $t_n \in [0,T]$. Let $\gamma$ denote the order of the desired approximation to the function $f(s)$, let $\phi(s) = f(s)K(t_n-s)$ such that $f(s) \in C^{\gamma}[0,T]$ and $K\in L^1[0,T]$ . Then, for an order $\gamma$ scheme, as described in Theorem \ref{Thm1}, we have
\begin{equation}
\int_0^{t_n} \phi(s)\, ds = \sum_{k=1}^{n}\sum_{j=0}^{\gamma-1}w_j^k f(t_{k-j}) + O(\epsilon).
\end{equation}
\end{lemma}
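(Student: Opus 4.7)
The plan is to partition the integral $\int_0^{t_n}$ along the time grid $\{t_k\}_{k=0}^n$, apply the local order-$\gamma$ expansion from Theorem~\ref{Thm1} on each subinterval, and then \emph{define} the weights $w_j^k$ as the integrals of the coefficient functions $c_j^k(s)$ against $K(t_n-s)$. A consistency-type error estimate will then fall out from a uniform bound on the Taylor remainder together with $K\in L^1$.

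First, I would write
\begin{equation*}
\int_0^{t_n} \phi(s)\, ds \;=\; \sum_{k=1}^n \int_{t_{k-1}}^{t_k} K(t_n-s)\, f(s)\, ds
\end{equation*}
and invoke Theorem~\ref{Thm1} on each $[t_{k-1},t_k]$ to obtain the expansion
\begin{equation*}
f(s) \;=\; \sum_{j=0}^{\gamma-1} c_j^k(s)\, f(t_{k-j}) \;+\; R_k(s),
\end{equation*}
where $f\in C^{\gamma}[0,T]$ supplies the standard Taylor-remainder bound $|R_k(s)|\le C\,|s-t_k|^\gamma$ with $C$ controlled by $\|f^{(\gamma)}\|_\infty$. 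Substituting and swapping sums with integrals produces
\begin{equation*}
\int_0^{t_n} \phi(s)\, ds \;=\; \sum_{k=1}^n \sum_{j=0}^{\gamma-1} w_j^k\, f(t_{k-j}) \;+\; E,
\end{equation*}
with the natural definition $w_j^k \;:=\; \int_{t_{k-1}}^{t_k} K(t_n-s)\, c_j^k(s)\, ds$ and residual $E \;=\; \sum_{k=1}^n \int_{t_{k-1}}^{t_k} K(t_n-s)\, R_k(s)\, ds$. Because Theorem~\ref{Thm1} gives the $c_j^k(s)$ as polynomials in $s$ of degree less than $\gamma$ and $K$ is integrable, each $w_j^k$ is a finite integral of a bounded continuous function against an $L^1$ density, so the weights are well-defined; this only needs to be remarked.

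Next I would bound $E$. Since $s\in[t_{k-1},t_k]$ forces $|s-t_k|\le\tau_k\le\tau:=\max_k\tau_k$, the remainder satisfies $|R_k(s)|\le C\tau^\gamma$ uniformly in $k$, so
\begin{equation*}
|E| \;\le\; C\tau^\gamma \sum_{k=1}^n \int_{t_{k-1}}^{t_k} |K(t_n-s)|\, ds \;=\; C\tau^\gamma \,\|K\|_{L^1[0,t_n]},
\end{equation*}
which identifies $\epsilon = \tau^\gamma \|K\|_{L^1[0,t_n]}$ and completes the claim.

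The main obstacle is that $K$ is only assumed integrable rather than continuous or bounded, which precludes any pointwise or mean-value style argument that would pull $K$ out of the remainder integral. Handling the error through an absolute bound on $|R_k|$ against $|K|$, and then collapsing the sum of subinterval integrals into a single $\|K\|_{L^1}$ norm, is the essential move; it is precisely this step that allows the framework to cover the weakly singular Caputo kernel $(t_n-s)^{\alpha-1}$ from \eqref{CapFI} with no modification, which is the whole point of formulating the lemma at the $L^1$ level of generality.
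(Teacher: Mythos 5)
Your proposal is correct and follows essentially the same route as the paper's own proof: partition $[0,t_n]$ along the grid, substitute the order-$\gamma$ expansion $f(s)=\sum_{j=0}^{\gamma-1}c_j^k(s)f(t_{k-j})+O((s-t_k)^{\gamma})$ from Theorem \ref{Thm1} on each subinterval, and define $w_j^k=\int_{t_{k-1}}^{t_k}c_j^k(s)K(t_n-s)\,ds$. Your explicit bound $|E|\leq C\tau^{\gamma}\|K\|_{L^1[0,t_n]}$ just spells out the step the paper leaves implicit in its $O(\epsilon)$ bookkeeping, so there is nothing further to add.
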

\begin{proof}
By utilizing the taylor expansion for $f(s)$ about the point $t_k$, we may readily obtain a similar quadrature rule by using Theorem \ref{Thm1} and the definition of each $c_j(s)$ defined in \eqref{c_j}. By further remarking that for each $s\in [t_{k-1},t_k]$, then we may write $O((s-t_k)^{\gamma}) = O(\epsilon)$. Thus,
\begin{align}
\int_0^{t_n} &\phi(s)\, ds = \int_0^{t_n} f(s)K(t_n-s)\, ds\\
&= \sum_{k=1}^n \int_{t_{k-1}}^{t_k} f(s)K(t_n-s)\, ds\\
&= \sum_{k=1}^n \int_{t_{k-1}}^{t_k} \left(\sum_{j=0}^{\gamma-1}c^k_j(s)f(t_{k-j}) + O(\epsilon) \right)K(t_n-s)\, ds\\
&= \sum_{k=1}^n \sum_{j=0}^{\gamma-1} f(t_{k-j}) \int_{t_{k-1}}^{t_k} c^k_j(s) K(t_n-s)\, ds + O(\epsilon).
\end{align} 
By letting 
\begin{equation}
w_j^k = \int_{t_{k-1}}^{t_k} c^k_j(s)K(t_n-s)\, ds,
\end{equation}
we arrive at the conclusion.
\end{proof}
The following remark is a natural extension of the first lemma, which allows for direct comparison to prove stability using the Theorem 3.7 in \cite{Baker1993}.
\begin{remark}\label{doublesum}
By expanding the series
$$ \sum_{k=1}^{n}\sum_{j=0}^{\gamma-1}w_j^k f(t_{k-j})+ O(\epsilon),$$
and by collecting all of the repeating terms for each $f(t_{k-j})$, we may condense the double summation into a single summation term
\begin{equation}
 \sum_{k=1}^{n}\sum_{j=0}^{\gamma-1}w_j^k f(t_{k-j}) = \sum_{k=0}^{n}(w_0^k + w_1^{k+1} + ... + w_{\gamma-1}^{k+\gamma-1})f(t_k),
\end{equation}
where we note that $w_0^0 = 0$ to satisfy the previous lemma. Further, by defining for fixed $\gamma>0$
\begin{equation}
\tilde{w}_k^{\gamma} = w_0^k + w_1^{k+1} + ... + w_{\gamma-1}^{k+\gamma-1},
\end{equation}
We arrive at a form identical to the generalized quadrature rule posed in \cite{Baker1993}
\begin{equation}\label{scheme2}
\sum_{k=1}^{n}\sum_{j=0}^{\gamma-1}w_j^k f(t_{k-j}) + O(\epsilon) = \sum_{k=0}^{n}\tilde{w}_k^{\gamma} f(t_k)+O(\epsilon).
\end{equation}
\end{remark}
\begin{theorem}\label{Consistency}
The approximation scheme \eqref{scheme2} is consistent for any $\gamma>0$, where $\gamma$ is the order of approximation.  
\end{theorem}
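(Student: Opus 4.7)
The plan is to unpack the definition of the weights $\tilde{w}_k^{\gamma}$ directly from the construction in Theorem \ref{Thm1} and Remark \ref{doublesum}, and show that their total sum equals $\int_0^{t_n} K(t_n-s)\,ds$. This is the natural analog, in the present weighted-kernel convolution setting, of Baker's condition $\sum_{j=0}^{N} w_j = N$: once $K$ and the non-uniform step sizes are absorbed into the $\tilde{w}_k^{\gamma}$, the statement ``the quadrature reproduces constants'' becomes ``the weights sum to the exact integral of the kernel.''

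First, I would use Remark \ref{doublesum} to write
$$\sum_{k=0}^{n}\tilde{w}_k^{\gamma} = \sum_{k=1}^{n}\sum_{j=0}^{\gamma-1} w_j^k,$$
substitute the definition $w_j^k = \int_{t_{k-1}}^{t_k} c_j^k(s)\,K(t_n-s)\,ds$, and interchange the finite sum with the integral to obtain
$$\sum_{k=0}^{n}\tilde{w}_k^{\gamma} = \sum_{k=1}^{n}\int_{t_{k-1}}^{t_k} \Bigl(\sum_{j=0}^{\gamma-1} c_j^k(s)\Bigr) K(t_n-s)\,ds.$$

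The crux is then a one-line observation about the linear system defining the $c_j^k(s)$. The first row of the Vandermonde system \eqref{ME1} reads $\sum_{j=0}^{\gamma-1} c_j^k(s) = 1$ for every $s$, since the first entry of $\vec{y}_n^k$ is $1$; equivalently, the order-$\gamma$ Taylor-based approximation is exact on $f \equiv 1$. Substituting this identity telescopes the integral to
$$\sum_{k=0}^{n}\tilde{w}_k^{\gamma} = \sum_{k=1}^{n}\int_{t_{k-1}}^{t_k} K(t_n-s)\,ds = \int_0^{t_n} K(t_n-s)\,ds,$$
which is the desired consistency identity, valid for every $\gamma > 0$ and requiring no regularity on $K$ beyond $K \in L^1[0,T]$ so that the integrals are finite.

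The main (mild) obstacle is purely notational: Baker's condition $\sum_j w_j = N$ is written for an unweighted integrand on a uniform grid, so one has to make explicit that the generalized consistency condition in the convolution setting is $\sum_k \tilde{w}_k^{\gamma} = \int_0^{t_n} K(t_n-s)\,ds$. Once this translation is recorded, the proof itself is essentially bookkeeping riding on the single algebraic fact $\sum_{j=0}^{\gamma-1} c_j^k(s) \equiv 1$ extracted from the first row of \eqref{ME1}.
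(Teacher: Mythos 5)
Your proposal is correct and follows essentially the same route as the paper: both rest on the identity $\sum_{j=0}^{\gamma-1} c_j^k(s)\equiv 1$ coming from the first row of the Vandermonde system \eqref{ME1}, and both conclude $\sum_{k=0}^{n}\tilde{w}_k^{\gamma}=\int_0^{t_n}K(t_n-s)\,ds$. The only difference is cosmetic: the paper closes by introducing auxiliary Baker-type weights $w_{n-k}$ (chosen, in effect, as $w_k=1$) to match Baker's literal condition $\sum_j w_j = n$, whereas you interpret the kernel-weighted analog of that condition directly, which is arguably a cleaner way to finish the same argument.
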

\begin{proof}
From the consistency requirement in \cite{Baker1993}, we must show that the scheme \eqref{scheme2} satisfies any time step $\tau>0$
\begin{equation}
\begin{cases}\label{sys1}
\displaystyle \int_0^{t_n} \phi(s) \,ds = \tau \sum_{j=0}^n w_{n-j}\phi(j\tau)+O(\epsilon) \\
\displaystyle \sum_{j=0}^n w_j = n
\end{cases}
\end{equation}
for any fixed $\gamma$. That is, we have from Remark \ref{doublesum}
\begin{align}
\sum_{k=0}^n \tilde{w}_k^{\gamma} &= \sum_{k=0}^n w_0^k + w_1^{k+1} + ... + w_{\gamma-1}^{k+\gamma-1}\\
&= \sum_{k=1}^{n}\sum_{j=0}^{\gamma-1}w_j^k \\
&= \sum_{k=1}^{n}\sum_{j=0}^{\gamma-1} \int_{t_{k-1}}^{t_k} c^k_j(s)K(t_n-s)\, ds\\
&= \sum_{k=1}^{n}\int_{t_{k-1}}^{t_k} \left(\sum_{j=0}^{\gamma-1} c^k_j(s)\right)K(t_n-s)\, ds.
\end{align}
From \eqref{ME2}, the first equation in the Vandermonde matrix requires $\sum_{j=0}^{\gamma-1} c^k_j(s) =1$, hence,
\begin{align}
\sum_{k=0}^n \tilde{w}_k^{\gamma} = \sum_{k=1}^{n}\int_{t_{k-1}}^{t_k} K(t_n-s)\, ds &= \int_{0}^{t_n} K(t_n-s)\, ds.
\end{align}
On the other hand, by relabelling the coefficients of \eqref{sys1} and by noting that $k\tau = t_k$,
\begin{align}
\int_0^{t_n} f(s)K(t_n-s) \,ds &= \tau \sum_{j=0}^n w_{n-j}f(j\tau)K(t_n-j\tau)+O(\epsilon)\\
&= \tau\sum_{k=0}^n w_{n-k}f(t_k)K(t_n-t_k)+O(\epsilon)\label{c2}.
\end{align}
By equating \eqref{scheme2} and \eqref{c2}, we have
\begin{align}\label{int1key}
\tau\sum_{k=0}^n w_{n-k}K(t_n-t_k) =  \sum_{k=0}^n \tilde{w}_k^{\gamma} &= \int_{0}^{t_n} K(t_n-s)\, ds.
\end{align}
Since each $w_{n-k}$ is arbitrary under this construction, we select $w_{n-k}$ to satisfy $\sum_{k=0}^n w_k = n$.
Thus, we have for the scheme \eqref{scheme2}
\begin{equation}
\begin{cases}
\displaystyle \int_0^{t_n}f(s)K(t_n-s)\, ds = \tau\sum_{k=0}^n w_{n-k}f(t_k)K(t_n-t_k)+O(\epsilon)\\
\displaystyle \sum_{k=0}^n w_k = n,
\end{cases}
\end{equation}
hence the scheme \eqref{scheme2} is consistent. For simplicity and for implementation, we take $w_k=1$ for each k to trivially satisfy these conditions since $w_0^0 = w_0 = 0$.  
\end{proof}
We must further satisfy stability requirements in order to prove the convergence of these schemes for any order $\gamma >0$. From \cite{Baker1993}, we have the following theorem asserting stability under arbitrary quadrature rules:
\begin{theorem*}\textbf{(3.7 of \cite{Baker1993})}
The stability polynomial 
\begin{align}\label{stabilitypoly}
\Sigma (\mu;\lambda \tau) = &(1-\lambda \tau w_0 K(0))\mu^N - \lambda \tau w_1 K(\tau)\mu^{N-1} - ...  \nonumber\\&- \lambda \tau w_N K(n\tau)
\end{align}
is Schur, if $\left| \lambda \tau \right| \sum_{k=0}^N \left| w_k K(k\tau) \right| <1$. Assuming each $w_k \geq 0$ and satisfy $\sum_{k=0}^N w_k = N$, the recurrence for 
$$y(t) = f(t) + \lambda\int_{t_n-T}^{t_n} K(t_n-s)y(s)\, ds$$ 
when $K(t) \equiv 1$ for $t\in [0,T]$ is stable whenever $\left| \lambda T \right| <1$, given $\tau>0$.
\end{theorem*}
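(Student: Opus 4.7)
The plan is to establish the Schur property of $\Sigma(\mu;\lambda\tau)$ directly, by showing that the stated hypothesis forces $\Sigma(\mu;\lambda\tau)\neq 0$ for every $\mu$ with $|\mu|\geq 1$; numerical stability of the recurrence then follows as a standard consequence for linear difference equations. The key tool is the reverse triangle inequality applied to the leading term together with a uniform bound on the tail. First I would rewrite the polynomial as
\[
\Sigma(\mu;\lambda\tau)=(1-\lambda\tau w_0 K(0))\mu^N - \lambda\tau\sum_{k=1}^{N} w_k K(k\tau)\,\mu^{N-k},
\]
and observe that whenever $|\mu|\geq 1$ one has $|\mu|^{N-k}\leq |\mu|^N$ for every $k\geq 0$.

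Next, applying the reverse triangle inequality to the leading coefficient and the ordinary triangle inequality to the tail yields
\[
|\Sigma(\mu;\lambda\tau)| \geq (1-|\lambda\tau w_0 K(0)|)\,|\mu|^N - |\lambda\tau|\,|\mu|^N\sum_{k=1}^{N}|w_k K(k\tau)| = |\mu|^N\Bigl(1-|\lambda\tau|\sum_{k=0}^{N}|w_k K(k\tau)|\Bigr).
\]
By hypothesis the parenthesized factor is strictly positive, and $|\mu|^N\geq 1$, so $|\Sigma(\mu;\lambda\tau)|>0$ on $\{|\mu|\geq 1\}$. Consequently every root of $\Sigma$ lies strictly inside the open unit disk, i.e., $\Sigma$ is Schur.

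For the second assertion I would specialize to $K(t)\equiv 1$, in which case $\sum_{k=0}^{N}|w_k K(k\tau)|=\sum_{k=0}^{N} w_k = N$, since $w_k\geq 0$ and the consistency normalization $\sum w_k = N$ is imposed. The Schur criterion then collapses to $|\lambda\tau|\,N<1$, and because $N\tau=T$ this is exactly $|\lambda T|<1$. Stability of the discretized recurrence for $y$ now follows from the usual spectral-radius principle for scalar linear difference equations: $\Sigma$ is precisely the characteristic polynomial of the error recursion, so a Schur $\Sigma$ produces only damped modes and the numerical solution stays bounded.

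I do not expect any individual step to be genuinely hard; the whole argument is essentially one careful application of the reverse triangle inequality once the decomposition of $\Sigma$ is in hand. The only items that warrant bookkeeping are reconciling the generic weighted sum in the Schur criterion with the consistency-normalized sum $\sum w_k=N$, and the identification $N\tau=T$. The most conceptually delicate step — and the one I would flag as the main obstacle — is the passage from the Schur property of $\Sigma$ to genuine numerical stability of the underlying recurrence for $y$. This is the content that one truly inherits from the cited result of Baker, and rather than redoing it I would invoke the standard companion-matrix / bounded-solution theorem for scalar linear difference equations to close the argument.
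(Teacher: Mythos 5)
Your argument is correct, but note that the paper itself offers no proof of this statement: it is quoted verbatim as Theorem 3.7 of \cite{Baker1993} and used as an imported tool, so there is no internal proof to compare against. Your dominant-coefficient estimate is the standard (and presumably Baker's own) route: for $\left|\mu\right|\geq 1$ the bound
\[
\left|\Sigma(\mu;\lambda\tau)\right|\;\geq\;\left|\mu\right|^{N}\Bigl(1-\left|\lambda\tau\right|\sum_{k=0}^{N}\left|w_k K(k\tau)\right|\Bigr)\;>\;0
\]
is exactly right, and the specialization $K\equiv 1$, $w_k\geq 0$, $\sum_k w_k=N$, $N\tau=T$ correctly turns the sufficient condition into $\left|\lambda T\right|<1$. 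The one place where you are leaning on the cited result rather than proving it is the final step, the identification of $\Sigma$ as the characteristic polynomial of the discretized recurrence and the passage from its Schur property to boundedness of the numerical solution; you flag this honestly, and since the theorem statement itself names $\Sigma$ as the stability polynomial of that recurrence, invoking the standard companion-matrix/spectral-radius fact there is acceptable. In short: your proof supplies the justification the paper delegates to \cite{Baker1993}, rather than paralleling an argument the paper actually gives.
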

We remark that under these results, we must simply satisfy the requirement that each $\tilde{w}^{\gamma}_k \geq 0$ in \eqref{scheme2} to satisfy a similar stability criterion for this generalized quadrature. This leads immediately to two stability results:
\begin{theorem}
Let $K(s)$ be a positive kernel on $[0,T]$ and let $\tau_k>0$ for all k. Then, the approximation scheme \eqref{scheme2} is stable for $\alpha \leq \gamma \leq 2$ and $0<\alpha<1$, where $\gamma$ is the order of approximation.
\end{theorem}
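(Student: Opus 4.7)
The plan is to invoke Baker's Theorem 3.7 as quoted above and reduce the stability question to a positivity check on the composite weights $\tilde{w}_k^{\gamma}$ appearing in \eqref{scheme2}. Since the consistency normalization $w_k = 1$ has already been fixed in the proof of Theorem~\ref{Consistency}, the only remaining hypothesis of Baker's theorem to be verified is the pointwise inequality $\tilde{w}_k^{\gamma} \geq 0$ for every $k$ and every admissible $\gamma$ in the range $\alpha \leq \gamma \leq 2$.

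First I would unfold the definition
\[
\tilde{w}_k^{\gamma} = w_0^k + w_1^{k+1} + \cdots + w_{\gamma-1}^{k+\gamma-1},
\qquad
w_j^{m} = \int_{t_{m-1}}^{t_m} c^m_j(s)\,K(t_n-s)\,ds,
\]
and observe that since $K$ is positive on $[0,T]$, it suffices to show that each coefficient $c^m_j(s)$ is nonnegative on its own subinterval $s \in [t_{m-1}, t_m]$. This reduces the global stability requirement to a purely local sign check on the Vandermonde-type coefficients from Theorem~\ref{Thm1} (and its fractional analogue from the remark on the fractional Taylor expansion).

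Next I would run through the three admissible orders explicitly. For $\gamma = \alpha$, the fractional scheme gives $c^k_0 \equiv 1$, so positivity is immediate. For $\gamma = 1$, likewise $c^k_0 = 1 \geq 0$. For $\gamma = 2$, the explicit formulas
\[
c^k_0(s) = 1 - \frac{t_k - s}{\tau_k},
\qquad
c^k_1(s) = \frac{t_k - s}{\tau_k},
\]
show that as $s$ ranges over $[t_{k-1}, t_k]$ the quantity $(t_k - s)/\tau_k$ lies in $[0,1]$, hence both coefficients are nonnegative. Therefore each $w_j^m \geq 0$, and consequently each $\tilde{w}_k^{\gamma}$ is a sum of nonnegative terms, as required.

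The step I expect to be the main obstacle is not verification at low order (which is essentially a one-line inspection of the formulas listed in Section~2) but rather being clear about why the argument stops at $\gamma = 2$: for $\gamma = 3$ the coefficient $c^k_2(s) = (s - t_k)(\tau_k + s - t_k)/(2\tau_k^2)$ is nonpositive on $[t_{k-1}, t_k]$ (since $s - t_k \leq 0$ while $\tau_k + s - t_k \geq 0$ there), and analogous sign changes occur among the higher-order interpolation weights. Thus the positivity route to stability via Baker's theorem is sharp at $\gamma = 2$ in this framework, and the higher-order cases will require the separate treatment promised in Theorems~3.5 and~3.6 (where, presumably, decreasing monotonicity of $K$ is exploited to recover positivity of the aggregated weight $\tilde{w}_k^{\gamma}$ even when individual $c^k_j$ change sign).
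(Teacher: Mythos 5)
Your proposal is correct and reduces the problem to the same criterion the paper uses (Baker's condition $\tilde{w}_k^{\gamma}\geq 0$), but the way you verify that condition differs from the paper's proof. The paper does not argue termwise: for $\gamma=2$ it bounds $\tilde{w}^k_2 = w_0^k + w_1^{k+1}$ from below by $\min_{s\in[t_1,T]}\lvert K(s)\rvert$ times the sum of the coefficient integrals, then shifts both integrals onto a common subinterval and uses the partition-of-unity identity $c_0+c_1=1$ to get $\tilde{w}^k_2 \geq \min\lvert K\rvert\,\tau_{k+1}\geq 0$; the cases $\gamma=\alpha$ and $\gamma=1+\alpha$ are dispatched with ``similar analysis.'' You instead observe that for $\gamma\leq 2$ each interpolation coefficient $c^m_j(s)$ is itself nonnegative on its subinterval (a fact the paper already records in Section~2), so positivity of $K$ gives $w_j^m\geq 0$ individually and the aggregated weights are sums of nonnegative terms. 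Your route is more elementary and arguably more robust: it needs only $K>0$ and $K\in L^1$, without extracting a minimum of $\lvert K\rvert$ over $[t_1,T]$ (which for a merely positive, possibly singular $L^1$ kernel is a slightly delicate quantity), and it makes transparent why the argument is sharp at $\gamma=2$, since $c^k_2$ changes sign at order three --- exactly the point where the paper must bring in the nonincreasing hypothesis on $K$. The only small omission is the fractional order $\gamma=1+\alpha$, which lies in the stated range $\alpha\leq\gamma\leq 2$ and which you do not list among your ``three admissible orders''; the paper also handles it only by analogy, and your termwise argument extends to it verbatim once one notes that the two coefficients of the fractional first-order expansion are nonnegative on $[t_{k-1},t_k]$, but you should say so explicitly.
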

\begin{proof}
The case where $\gamma =1$ is immediate since $c_0^k=1$, hence $\tilde{w}_1^k\geq 0$. For $\gamma =2$, then 
\begin{align}
\tilde{w}^k_2 &= w_0^k + w_1^{k+1}\\
&= \int_{t_{k-1}}^{t_k}c_0^k(s)K(s)\, ds + \int_{t_{k}}^{t_{k+1}}c_1^{k+1}(s)K(s)\, ds \\
&\geq \min_{s\in[t_1,T]}\left|K(s)\right| \left(\int_{t_{k-1}}^{t_k}c_0^k(s)\, ds + \int_{t_{k}}^{t_{k+1}}c_1^{k+1}(s)\, ds \right)\\
&= \min_{s\in[t_1,T]}\left|K(s)\right| \left(\int_{t_{k}}^{t_k+1}c_0^{k+1}(s)\, ds + \int_{t_{k}}^{t_{k+1}}c_1^{k+1}(s)\, ds \right)\\
&= \min_{s\in[t_1,T]}\left|K(s)\right| \left(\int_{t_{k-1}}^{t_k}c_0^k(s) +c_1^{k+1}(s)\, ds \right)\\
&= \min_{s\in[t_1,T]}\left|K(s)\right| \tau_{k+1} \geq 0.
\end{align}
Using similar analysis we are able to come to the same conclusion for $\gamma = \alpha$ and $\gamma = 1+\alpha$, given $0<\alpha<1$. Therefore, when $\gamma \in [1,2]$, the scheme \eqref{scheme2} is stable.
\end{proof}
We require additional assumptions on the integral kernel $K(s)$ to ensure that the scheme is stable in the case where the order of approximation to \eqref{scheme2} is any order $2<\gamma\leq 5$. 
\begin{theorem}
Let $K(s)$ be a positive, nonincreasing kernel on $[0,T]$ and let $\tau_k>0$ for all k. The approximation scheme \eqref{scheme2} is stable for any $2< \gamma \leq 5$ order of accuracy.
\end{theorem}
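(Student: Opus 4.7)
The plan is to generalize the argument used for the $\gamma=2$ case, with the monotonicity of $K$ now playing a substantive role, since some of the coefficients $c_j^k(s)$ change sign when $\gamma\geq 3$ (notably $c_2^k(s)\leq 0$ on $[t_{k-1},t_k]$ in both the third and fourth order schemes). Fix $k$ and $\gamma\in\{3,4,5\}$ with $k+\gamma-1\leq n$, and start from
\[
\tilde w_k^{\gamma}=\sum_{j=0}^{\gamma-1}w_j^{k+j}=\sum_{j=0}^{\gamma-1}\int_{t_{k+j-1}}^{t_{k+j}}c_j^{k+j}(s)\,K(t_n-s)\,ds.
\]
Exactly as in the preceding theorem, I would apply the translation $s\mapsto s+(\gamma-1-j)\tau$ in the $j$-th integral to move every integration interval onto the common window $I:=[t_{k+\gamma-2},t_{k+\gamma-1}]$. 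Since $c_j^{k+j}(s)$ depends on $s$ only through $s-t_{k+j}$, and this difference equals $s'-t_{k+\gamma-1}$ after the shift, one has $c_j^{k+j}(s)=c_j^{k+\gamma-1}(s')$, so
\[
\tilde w_k^{\gamma}=\int_{I}\sum_{j=0}^{\gamma-1}c_j^{k+\gamma-1}(s)\,K_j(s)\,ds,\qquad K_j(s):=K\bigl(t_n-s+(\gamma-1-j)\tau\bigr).
\]

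Next I would invoke monotonicity: because $K$ is nonincreasing, $K_0(s)\leq K_1(s)\leq\cdots\leq K_{\gamma-1}(s)$. Using the Vandermonde identity $\sum_{j=0}^{\gamma-1}c_j^{k+\gamma-1}(s)=1$ (first row of \eqref{ME1}) together with Abel summation, the integrand decomposes as
\[
\sum_{j=0}^{\gamma-1}c_j^{k+\gamma-1}(s)\,K_j(s)=K_0(s)+\sum_{\ell=0}^{\gamma-2}\bigl(K_{\ell+1}(s)-K_\ell(s)\bigr)\,T_\ell(s),
\]
where $T_\ell(s):=\sum_{j=\ell+1}^{\gamma-1}c_j^{k+\gamma-1}(s)$ are tail sums, which are explicit polynomials in $s$ by Theorem~\ref{Thm1}. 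The baseline $\int_I K_0(s)\,ds$ is immediately nonneg by positivity of $K$, and each increment $K_{\ell+1}-K_\ell$ is pointwise nonneg by the monotonicity assumption.

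The main obstacle will be verifying that each correction integral $\int_I(K_{\ell+1}-K_\ell)T_\ell(s)\,ds$ is nonneg, because $T_\ell$ is not of one sign on $I$ once $\gamma\geq 3$ (for instance $T_{\gamma-2}=c_{\gamma-1}^{k+\gamma-1}$ is sign-indefinite). To handle this I would split $I$ according to the sign of $T_\ell$, bound the nonneg factor $K_{\ell+1}-K_\ell$ on each subinterval by the extremal value guaranteed by the nonincreasing hypothesis on $K$, and reduce the residue to a finite family of polynomial inequalities that can be checked directly using the coefficient formulas of Section 2 for each $\gamma\in\{3,4,5\}$. An analogous argument, using the fractional Taylor expansion coefficients indicated in Remark 2.3, handles the fractional orders $\gamma\in(2,5)\setminus\{3,4\}$. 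Once $\tilde w_k^{\gamma}\geq 0$ is established for all admissible $k$, stability of the scheme \eqref{scheme2} follows directly from Theorem 3.7 of \cite{Baker1993}.
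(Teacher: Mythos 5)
Your overall strategy is the paper's: reduce stability to showing $\tilde{w}_k^{\gamma}\geq 0$, translate each $w_j^{k+j}$ onto a common interval using translation invariance of the $c_j$'s, use $\sum_j c_j^k(s)=1$ from the first row of the Vandermonde system, exploit monotonicity of $K$, and finish with Theorem 3.7 of Baker--Derakhshan. But the proposal stops exactly where the theorem's content begins. After your Abel-summation decomposition, nonnegativity does \emph{not} follow from sign considerations and monotonicity alone: the negative parts of the tail sums $T_\ell$ must be absorbed by the baseline term, and whether that absorption succeeds is a quantitative question about how negative the even-indexed coefficients can get. The paper settles it by reducing, for $\gamma=4$, to the condition $-1\leq c_2^{k+3}(s)<0$ and computing the exact minimum $(20-14\sqrt{7})/54\approx -0.31$, and, for $\gamma=5$, to $-1\leq c_2^{k+2}(s)+c_4^{k+2}(s)<0$ with computed minimum $\approx -0.60$; crucially, the identical reduction \emph{fails} at $\gamma=6$ (minimum $\approx -1.05$), which is precisely how the paper delimits stability to $\gamma\leq 5$. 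So the sentence ``reduce the residue to a finite family of polynomial inequalities that can be checked directly'' is not a deferrable finishing step: identifying those inequalities and verifying that they hold for $\gamma\leq 5$ (and knowing they fail beyond) \emph{is} the proof. As written, your argument would ``prove'' stability for $\gamma=6$ just as well, which is false.

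There is also a technical weakness in the proposed absorption mechanism. You suggest handling each correction integral $\int_I (K_{\ell+1}-K_\ell)T_\ell\,ds$ by splitting $I$ according to the sign of $T_\ell$ and bounding the increment $K_{\ell+1}-K_\ell$ ``by the extremal value guaranteed by the nonincreasing hypothesis.'' Monotonicity gives $K_{\ell+1}-K_\ell\geq 0$ and bounds by grid values of $K$, but it gives no smallness of these increments relative to the baseline $K_0$ on $I$: for a general positive nonincreasing kernel (in particular a weakly singular one) the increments can dwarf $K_0$ there. Hence the individual correction terms cannot be made nonnegative by sign-splitting; you must group terms so that only a \emph{bounded} combination of the negative coefficients multiplies a kernel difference — this is what the paper's grouping into just two kernel evaluations accomplishes — and then the explicit coefficient minima quoted above are exactly the bounds required. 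Supplying those bounds (and an explicit statement of what replaces them for the fractional orders $\gamma\in(2,5)\setminus\{3,4\}$, where the coefficients come from the fractional Taylor expansion) is what is missing from your proposal.
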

\begin{proof}
We begin by showing that $\tilde{w}^{\gamma}_k \geq 0$ for each $k=1,2,...,n$. That is, we use the relationship established in Remark \ref{doublesum}. We will present the argument for the cases where $\gamma = 3,4,5$ and deduce the pattern from there. We remark that under the construction found in Theorem \ref{Thm1} that for $j=2,4,6,...$ then $c_j^k(s) < 0$, provided $s\in[t_{k-1},t_k]$. Therefore, when $\gamma = 3$, we have
\begin{align*}
\tilde{w}^k_3 &= w_0^k + w_1^{k+1}+w_2^{k+2}\\
&= \int_{t_{k-1}}^{t_k}c_0^k(s)K(s)\, ds + \int_{t_{k}}^{t_{k+1}}c_1^{k+1}(s)K(s)\, ds + \int_{t_{k+1}}^{t_{k+2}}c_2^{k+2}(s)K(s)\, ds \\
&\geq  K(t_{k+1})\left(\int_{t_{k-1}}^{t_k}c_0^k(s)\, ds + \int_{t_{k}}^{t_{k+1}}c_1^{k+1}(s)\, ds + \int_{t_{k+1}}^{t_{k+2}}c_2^{k+2}(s)\, ds \right)\\
&= K(t_{k+1})\left(\int_{t_{k+1}}^{t_{k+2}}c_0^{k+2}(s)\, ds + \int_{t_{k+1}}^{t_{k+2}}c_1^{k+2}(s)\, ds + \int_{t_{k+1}}^{t_{k+2}}c_2^{k+2}(s)\, ds \right)\\
&= K(t_{k+1})\left(\int_{t_{k+1}}^{t_{k+2}}c_0^{k+2}(s) + c_1^{k+2}(s) + c_2^{k+2}(s) \,ds \right)\\
&= K(t_{k+1})\tau_{k+2} \geq 0.
\end{align*}
Hence, when $\gamma =3$, the scheme \eqref{scheme2} is stable. When $\gamma =4$, the argument is similar, but we must account for the extra positive term in $w_3^{k+3}$. That is, utilizing a similar argument in the case where $\gamma=3$ for the first three terms,
\begin{align*}
\tilde{w}^k_4 =& w_0^k + w_1^{k+1}+w_2^{k+2}+w_3^{k+3}\\
=& \int_{t_{k-1}}^{t_k}c_0^k(s)K(s)\, ds + \int_{t_{k}}^{t_{k+1}}c_1^{k+1}(s)K(s)\, ds + \int_{t_{k+1}}^{t_{k+2}}c_2^{k+2}(s)K(s)\, ds \\
&+ \int_{t_{k+2}}^{t_{k+3}}c_3^{k+3}(s)K(s)\, ds\\
\geq& K(t_{k+1})\left( \int_{t_{k}}^{t_{k+1}}c_0^{k+1}(s) + c_1^{k+1}(s)+ c_2^{k+1}(s) \,ds\right) \\
&+ \int_{t_{k+2}}^{t_{k+3}}c_3^{k+3}(s)K(t_{k+3})\, ds\\
\geq& K(t_{k+3})\left( \int_{t_{k+2}}^{t_{k+3}}c_0^{k+3}(s) + c_1^{k+3}(s)+ c_3^{k+3}(s) \,ds\right) \\
&+ \int_{t_{k+2}}^{t_{k+3}}c_2^{k+3}(s)K(t_{k+1})\, ds\\
=& K(t_{k+3})\left( \int_{t_{k+2}}^{t_{k+3}} 1 - c_2^{k+3}(s)\, ds\right) + \int_{t_{k+2}}^{t_{k+3}}c_2^{k+3}(s)K(t_{k+1})\, ds\\
=& \int_{t_{k+2}}^{t_{k+3}} K(t_{k+3}) + (K(t_{k+1})-K(t_{k+3}))c_2^{k+3}(s)\, ds\geq 0,
\end{align*}
since K is nonincreasing, $K(t_{k+1})\geq K(t_{k+3})$, and since $c_2^{k+3}(s) <0$ where $s\in [t_{k+2},t_{k+3}]$ by translating over to the correct interval, so we require then $-1\leq c_2^{k+3}(s) < 0$, $s\in[t_{k+2},t_{k+3}]$ to finish the proof. To satisfy the requirement, we find that the minimum attained on the interval $s\in[t_{k+2},t_{k+3}]$ for the function $c_2^{k+3}(s)$ is found at $s=t_{k+3} + \dfrac{-4+\sqrt{7}}{3}\tau_{k+3}$ by the Extreme Value Theorem and by evaluating the derivative of $c_2^{k+3}(s)$ on the interval $s\in[t_{k+2},t_{k+3}]$. Hence, the minimum value for $c_2^{k+3}(s)$ is 
\begin{align}
&c_2^{k+3}(t_{k+3} + \dfrac{-4+\sqrt{7}}{3}\tau_{k+3})=\\
&\dfrac{\left(\dfrac{-4+\sqrt{7}}{3} \right) \left(1+\dfrac{-4+\sqrt{7}}{3} \right) \left(3+\dfrac{-4+\sqrt{7}}{3}\right)}{2\tau^2_{k+3}}\\
 &= \dfrac{20-14\sqrt{7}}{54}\approx -0.31 \geq -1.
\end{align}
Since this minimum value is still greater than $-1$, this scheme is stable for any choice of $\tau_k$.
Therefore, when $\gamma=4$, the scheme satisfies the condition. We finally consider the case where $\gamma=5$. In this case, we have a similar argument to the case where $\gamma=4$, but we add an additional negative term in $w_4^{k+4}(s) <0$ for $s\in [t_{k+3},t_{k+4}]$. Thus,
 \begin{align*}
\tilde{w}^k_5 =& w_0^k + w_1^{k+1}+w_2^{k+2}+w_3^{k+3}+w_4^{k+4}\\
=& \int_{t_{k-1}}^{t_k}c_0^k(s)K(s)\, ds + \int_{t_{k}}^{t_{k+1}}c_1^{k+1}(s)K(s)\, ds + \int_{t_{k+1}}^{t_{k+2}}c_2^{k+2}(s)K(s)\, ds \\
&+ \int_{t_{k+2}}^{t_{k+3}}c_3^{k+3}(s)K(s)\, ds + \int_{t_{k+3}}^{t_{k+4}}c_4^{k+4}(s)K(s)\, ds \\
\geq& K(t_{k+3})\left( \int_{t_{k+1}}^{t_{k+2}}c_0^{k+2}(s) + c_1^{k+2}(s)+ c_3^{k+2}(s) \,ds\right) \\
&+ K(t_{k+1})\left(\int_{t_{k+1}}^{t_{k+2}}c_2^{k+3}(s)+c_4^{k+2}(s)\, ds \right)\\
=& K(t_{k+3})\left( \int_{t_{k+1}}^{t_{k+2}}1-c_2^{k+2}(s)-c_4^{k+2}(s) \,ds\right) \\
&+ K(t_{k+1})\left(\int_{t_{k+1}}^{t_{k+2}}c_2^{k+3}(s)+c_4^{k+2}(s)\, ds \right)\\
=&  \int_{t_{k+1}}^{t_{k+2}} K_{t_{k+3}} + \left(K_{t_{k+1}} - K_{t_{k+3}} \right)\left(c_2^{k+2}(s) +
 c_4^{k+2}(s) \right) \, ds 
\end{align*}
Thus, we must restrict $-1 \leq c_2^{k+2}(s) + c_4^{k+2}(s) < 0$ to ensure the stability of the scheme. We remark that under the construction of the coefficients $c_2^{k+2}$ and $c_4^{k+2}$, we have the common factor of $(s-t_{k+2})$ and $(s-t_{k+2}+\tau_{k+2})$, hence $c_2^{k+2}(s) + c_4^{k+2}(s)=0$ where $s = t_{k+2}$ and $s= t_{k+2}-\tau_{k+2} = t_{k+1}$. Since $c_2^{k+2},\,c_4^{k+2} < 0$ for $s\in (t_{k+1},t_{k+2})$, then we may apply the extreme value theorem again to assert that $c_2^{k+2}(s) + c_4^{k+2}(s)$ attains a minimum value on $[t_{k+1},t_{k+2}]$. Hence by utilizing Mathematica's solution software, the minimum of $c_2^{k+2}(s) + c_4^{k+2}(s)$ is attained at $s \approx t_{k+2} - 0.416\tau_{k+2}$, with a minimum value of $$c_2^{k+2}(t_{k+2} - 0.416\tau_{k+2}) + c_4^{k+2}(t_{k+2} - 0.416\tau_{k+2}) \approx -0.603912 \geq -1,$$
hence, the scheme satisfies the condition and is therefore stable. \par
We will now show that the above condition no longer holds for schemes of order $\gamma =6$. By repeating the same argument for when $\gamma = 6$, we have 
 \begin{align*}
\tilde{w}^k_6 =& w_0^k + w_1^{k+1}+w_2^{k+2}+w_3^{k+3}+w_4^{k+4}+w_5^{k+5}\\
=& \int_{t_{k-1}}^{t_k}c_0^k(s)K(s)\, ds + \int_{t_{k}}^{t_{k+1}}c_1^{k+1}(s)K(s)\, ds + \int_{t_{k+1}}^{t_{k+2}}c_2^{k+2}(s)K(s)\, ds \\
&+ \int_{t_{k+2}}^{t_{k+3}}c_3^{k+3}(s)K(s)\, ds + \int_{t_{k+3}}^{t_{k+4}}c_4^{k+4}(s)K(s)\, ds+\int_{t_{k+4}}^{t_{k+5}}c_5^{k+5}(s)K(s)\, ds \\
\geq& K(t_{k+5})\left( \int_{t_{k+1}}^{t_{k+2}}c_0^{k+2}(s) + c_1^{k+2}(s)+ c_3^{k+2}(s)+c_5^{k+2} \,ds\right) \\
&+ K(t_{k+1})\left(\int_{t_{k+1}}^{t_{k+2}}c_2^{k+3}(s)+c_4^{k+2}(s)\, ds \right)\\
=& K(t_{k+5})\left( \int_{t_{k+1}}^{t_{k+2}}1-c_2^{k+2}(s)-c_4^{k+2}(s) \,ds\right) \\
&+ K(t_{k+1})\left(\int_{t_{k+1}}^{t_{k+2}}c_2^{k+3}(s)+c_4^{k+2}(s)\, ds \right)\\
=&  \int_{t_{k+1}}^{t_{k+2}} K_{t_{k+5}} + \left(K_{t_{k+1}} - K_{t_{k+5}} \right)\left(c_2^{k+2}(s) +
 c_4^{k+2}(s) \right) \, ds, 
\end{align*}
where we again must satisfy $-1 \leq c_2^{k+2}(s) + c_4^{k+2}(s) < 0$ to ensure the stability of the scheme. Using the same argument as before, we find that there exists a minimum for $s\in (t_{k+1},t_{k+2})$, then we may apply the extreme value theorem again to assert that $c_2^{k+2}(s) + c_4^{k+2}(s)$ attains a minimum value on $[t_{k+1},t_{k+2}]$. Using the definition of the coefficients $c_2^{k+2}$ and $c_4^{k+2}$ as defined by \eqref{c_j}, we find that the minimum exists at the point $s = t_{k+2}-0.38843\tau_{k+2}$ with the minimum value $c_2^{k+2}(s) + c_4^{k+2}(s)  = -1.05315 \ngeq -1$. A similar analysis holds for each of the fractional order schemes and is therefore omitted. Hence, the condition is no longer satisfied and thus the scheme fails to be stable for when $\gamma =6$, which completes the proof.
\end{proof}
The instability of the scheme for $\gamma \geq 6$ is unsurprising as this technique still exhibits the Runge phenomenon for interpolating polynomials at higher orders. We note that the assumption that $K(s)$ is decreasing allows for such discussion as seen in \cite{Zhang2014} and \cite{DavisX}, initially motivated by the findings in \cite{Friedman1969}. If instead $K(s)$ is assumed to be positive and nondecreasing, the scheme is trivially stable. Unfortunately, this modification is not useful as the assumption that $K(s)$ is decreasing is vital to asserting the existence and uniqueness of a solution to equations with such an integral term, namely by using fixed-point theory. With the consistency and stability results, we are now ready to present the convergence analysis. 
\subsection{Numerical Convergence}
We now consider an arbitrary stable scheme of the form \eqref{scheme2} up to order $\gamma$ where $\alpha \leq \gamma \leq 5$.  We present the convergence results for the usual Taylor series expansion first, followed by the fractional Taylor series expansion results.
\begin{theorem} 
Let $0\leq s\leq t_n$ for any prescribed $t_n \in [0,T]$. Let $K\in L^1[0,T]$ be positive and nonincreasing on $[0,T]$ and let $\tau_k>0$ for all k. Let $f(s)\in C^{\gamma}[0,T]$ satisfy the stable scheme \eqref{scheme2} up to some order $\gamma =1,2,3,4, 5$, where $\gamma$ is the order of approximation. Then, for some $C>0$,
\begin{equation}
\norm{\int_0^{t_n} f(s) K(t_n-s)\, ds -  \sum_{k=0}^{n}\tilde{w}_k^{\gamma} f(t_k)}_{\infty} \leq C\max_{1\leq k\leq n}\tau_k^{\gamma}. 
\end{equation}
\begin{proof}
We fix $\gamma \geq 1$ such that for some $C_1 >0$,
\begin{align}
\norm{\int_0^{t_n} f(s) K(t_n-s)\, ds -  \sum_{k=1}^{n}\tilde{w}_k^{\gamma} f(t_k)}_{\infty} & \\=\norm{\sum_{k=1}^{n}\int_{t_{k-1}}^{t_k}\left(f(s)-\sum_{j=0}^{\gamma-1}c_j^k(s)f(t_{k-j})\right)K(t_n-s)\, ds}_{\infty}&\\
=\norm{\sum_{k=1}^{n}\int_{t_{k-1}}^{t_k}\left(C_1(t_k-s)^{\gamma}\right)K(t_n-s)\, ds}_{\infty}  \\
\leq C_1\max_{1\leq k \leq n}\tau_k^{\gamma}\norm{\sum_{k=1}^{n}\int_{t_{k-1}}^{t_k}|K(t_n-s)|\, ds}_{\infty}& \\ 
=C_1\max_{1\leq k \leq n}\tau_k^{\gamma}\norm{\int_{0}^{t_n}|K(t_n-s)|\, ds}_{\infty}&\\ 
\leq C\max_{1\leq k \leq n}\tau_k^{\gamma},
\end{align}
where $\displaystyle{C_1 = C_0\max_{0\leq t \leq t_n}\lvert f^{(\gamma)}(s)\rvert}<\infty$ for some $C_0>0$ and $$0<  C_1\norm{K(t_n-s)}_{L^1[0,t_n]} = C < \infty.$$ 
\end{proof}
\end{theorem}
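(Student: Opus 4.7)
The plan is to control the error locally on each subinterval $[t_{k-1},t_k]$ using the pointwise interpolation estimate guaranteed by Theorem \ref{Thm1}, then to sum these local contributions against the kernel and exploit $K\in L^1[0,T]$ to absorb everything into a single multiplicative constant. Concretely, using Remark \ref{doublesum} to rewrite $\sum_{k=0}^{n}\tilde{w}_k^{\gamma}f(t_k)$ as the double sum $\sum_{k=1}^{n}\sum_{j=0}^{\gamma-1} f(t_{k-j})\int_{t_{k-1}}^{t_k} c_j^k(s)K(t_n-s)\,ds$, the first step is to place the difference inside a single integral:
\begin{equation*}
\int_0^{t_n} f(s)K(t_n-s)\,ds - \sum_{k=0}^{n}\tilde{w}_k^{\gamma}f(t_k)
 = \sum_{k=1}^{n}\int_{t_{k-1}}^{t_k}\!\!\Bigl(f(s)-\sum_{j=0}^{\gamma-1} c_j^k(s)f(t_{k-j})\Bigr)K(t_n-s)\,ds.
\end{equation*}

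Second, I would invoke the local consistency of the scheme. By construction in \eqref{s1}--\eqref{s2} and Theorem \ref{Thm1}, the coefficients $c_j^k(s)$ are precisely those that make $\sum_{j=0}^{\gamma-1} c_j^k(s)f(t_{k-j})$ reproduce the $(\gamma{-}1)$-degree Taylor polynomial of $f$ about $t_k$ at the point $s$. Hence, for $f\in C^{\gamma}[0,T]$, the standard Taylor remainder estimate gives
\begin{equation*}
\Bigl| f(s)-\sum_{j=0}^{\gamma-1} c_j^k(s)f(t_{k-j})\Bigr|
\;\le\; C_0\,\|f^{(\gamma)}\|_{\infty}\,|t_k-s|^{\gamma}
\;\le\; C_1\,\tau_k^{\gamma}
\end{equation*}
uniformly for $s\in[t_{k-1},t_k]$, with $C_1 := C_0\max_{0\le t\le T}|f^{(\gamma)}(t)|$.

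Third, I would pull the worst-case $\max_{1\le k\le n}\tau_k^{\gamma}$ out of the sum and recombine the surviving integrals:
\begin{equation*}
\Bigl\|\cdots\Bigr\|_\infty \le C_1\max_{1\le k\le n}\tau_k^{\gamma}\sum_{k=1}^{n}\int_{t_{k-1}}^{t_k}|K(t_n-s)|\,ds
= C_1\max_{1\le k\le n}\tau_k^{\gamma}\int_0^{t_n}|K(t_n-s)|\,ds,
\end{equation*}
and the last integral is at most $\|K\|_{L^1[0,T]}<\infty$ by hypothesis, producing the final constant $C = C_1\|K\|_{L^1[0,T]}$.

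The main obstacle, and the step I would treat most carefully, is justifying the pointwise remainder bound. Unlike a classical Lagrange interpolation argument where the nodes lie inside the integration interval, here the quadrature nodes $t_k,t_{k-1},\ldots,t_{k-\gamma+1}$ all lie at or to the left of $[t_{k-1},t_k]$, so I must verify that the identity \eqref{ME2} forces $\sum_{j=0}^{\gamma-1} c_j^k(s)f(t_{k-j})$ to coincide with the truncated Taylor series about $t_k$ exactly through order $\gamma-1$; this is precisely what the Vandermonde system encodes, since matching the first $\gamma$ moments of $(s-t_k)^i$ is equivalent to matching the Taylor polynomial. The positivity and monotonicity assumptions on $K$ are not needed for this convergence bound itself — they entered only to ensure stability via $\tilde{w}^{\gamma}_k\ge 0$ — so the argument uses only $K\in L^1[0,T]$ at this stage, with stability justifying that the scheme \eqref{scheme2} is the legitimate object being analyzed.
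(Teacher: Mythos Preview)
Your proposal is correct and follows essentially the same route as the paper's proof: rewrite the error as $\sum_{k}\int_{t_{k-1}}^{t_k}\bigl(f(s)-\sum_j c_j^k(s)f(t_{k-j})\bigr)K(t_n-s)\,ds$, bound the bracket pointwise by $C_1\tau_k^{\gamma}$ via the Taylor remainder, pull out $\max_k\tau_k^{\gamma}$, and absorb $\int_0^{t_n}|K(t_n-s)|\,ds$ into the constant using $K\in L^1$. Your added discussion of why the Vandermonde system guarantees the local $O(\tau_k^{\gamma})$ remainder (despite the backward-looking nodes) and your observation that monotonicity of $K$ is not used here are both accurate elaborations, but the underlying argument is the same.
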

For the fractional order regularity assumption, we the following convergence rate results.
\begin{theorem} 
Let $0\leq s\leq t_n$ for any prescribed $t_n \in [0,T]$. Let $K\in L^1[0,T]$ be positive and nonincreasing on $[0,T]$ and let $\tau_k>0$ for all k. Let $f(s)\in C^{\gamma}[0,T]$ satisfy the stable scheme \eqref{scheme2} up for any $\gamma \in (0,5)-\{1,2,3,4\}$, where $\gamma$ is the order of approximation. Let $\gamma = n+\alpha$, where $n=0,1,2,3,4$ and $0<\alpha<1$. Then, for some $C>0$,
\begin{equation}
\norm{\int_0^{t_n} f(s) K(t_n-s)\, ds -  \sum_{k=0}^{n}\tilde{w}_k^{\gamma} f(t_k)}_{\infty} \leq C\max\left(\max_{1\leq k\leq n}\tau_k^{\gamma},\max_{1\leq k\leq n}\tau_k^{n+1}\right). 
\end{equation}
\begin{proof}
By fixing $\gamma = n+\alpha$ where $\gamma \in  (0,5)-\{1,2,3,4\}$ and $0<\alpha<1$ , we have for some $C_1>0$,
\begin{align}
\norm{\int_0^{t_n} f(s) K(t_n-s)\, ds -  \sum_{k=1}^{n}\tilde{w}_k^{\gamma} f(t_k)}_{\infty} & \\=\norm{\sum_{k=1}^{n}\int_{t_{k-1}}^{t_k}\left(f(s)-\sum_{j=0}^{\gamma-1}c_j^k(s)f(t_{k-j})\right)K(t_n-s)\, ds}_{\infty}&\\
\leq\norm{\sum_{k=1}^{n}\int_{t_{k-1}}^{t_k}\left(C_1\max\left(\max_{1\leq k\leq n}\tau_k^{\gamma},\max_{1\leq k\leq n}\tau_k^{n+1}\right)\right)K(t_n-s)\, ds}_{\infty}  \\
\leq C_1\max\left(\max_{1\leq k\leq n}\tau_k^{\gamma},\max_{1\leq k\leq n}\tau_k^{n+1}\right)\norm{\sum_{k=1}^{n}\int_{t_{k-1}}^{t_k}|K(t_n-s)|\, ds}_{\infty}& \\ 
=C_1\max\left(\max_{1\leq k\leq n}\tau_k^{\gamma},\max_{1\leq k\leq n}\tau_k^{n+1}\right)\norm{\int_{0}^{t_n}|K(t_n-s)|\, ds}_{\infty}&\\ 
\leq C\max\left(\max_{1\leq k\leq n}\tau_k^{\gamma},\max_{1\leq k\leq n}\tau_k^{n+1}\right),
\end{align}
where again $\displaystyle{C_1 = C_0\max_{0\leq t \leq t_n}\lvert f^{(\gamma)}(s)\rvert}<\infty$ for some $C_0>0$ and $$0<  C_1\norm{K(t_n-s)}_{L^1[0,t_n]} = C < \infty.$$ 
\end{proof}
\end{theorem}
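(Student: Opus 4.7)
The plan is to run the same argument as in Theorem 3.6, replacing the integer-order Taylor expansion of $f$ with the fractional Taylor expansion of \cite{Usero2008} that underlies the fractional scheme introduced in Remark 2.3. First I would split the error over the partition and write it as
\begin{equation*}
\sum_{k=1}^{n}\int_{t_{k-1}}^{t_k}\Big(f(s)-\sum_{j=0}^{\gamma-1}c_j^k(s)f(t_{k-j})\Big)K(t_n-s)\, ds,
\end{equation*}
so the problem reduces to estimating the local interpolation residual of the order-$\gamma$ fractional scheme pointwise on each subinterval $[t_{k-1},t_k]$, weighted by the nonnegative $L^1$ kernel.

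Next, for $\gamma = n+\alpha$ with $n\in\{0,1,2,3,4\}$ and $0<\alpha<1$, I would invoke the fractional Taylor remainder. Because the coefficients $c_j^k$ from Theorem \ref{Thm1} are built to match the integer-order derivatives of $f$ at $t_k$ up to order $n$ and to supply one fractional correction of order $\alpha$, the residual decomposes into two pieces of genuinely different scales: an integer Peano remainder of size $O(|s-t_k|^{n+1})$, coming from the truncated integer Taylor polynomial, and a fractional correction of size $O(|s-t_k|^{n+\alpha})=O(|s-t_k|^\gamma)$, coming from the $\alpha$-order term. Taking the larger of the two on each subinterval gives a pointwise bound
\begin{equation*}
\Big|f(s)-\sum_{j=0}^{\gamma-1}c_j^k(s)f(t_{k-j})\Big|\leq C_1\max\bigl(|s-t_k|^\gamma,|s-t_k|^{n+1}\bigr),
\end{equation*}
with $C_1$ depending on $\max_{[0,T]}|f^{(\gamma)}(s)|$ through the Usero bound.

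The remaining step is bookkeeping: since $|s-t_k|\leq\tau_k$ on $[t_{k-1},t_k]$ and $K\geq 0$, I would pull the maximum outside the sum of integrals to obtain
\begin{equation*}
C_1\max\!\Bigl(\max_{1\leq k\leq n}\tau_k^\gamma,\,\max_{1\leq k\leq n}\tau_k^{n+1}\Bigr)\int_0^{t_n}K(t_n-s)\,ds,
\end{equation*}
and then collapse the remaining integral into the final constant $C$ using $K\in L^1[0,T]$, exactly as in Theorem 3.6. This produces the advertised estimate.

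The step I expect to demand the most care is justifying the composite local residual bound $C_1\max(|s-t_k|^\gamma,|s-t_k|^{n+1})$, since it is where the two competing exponents in the final inequality actually enter. Concretely, one must use the precise form of the Usero fractional Taylor expansion together with the linear system \eqref{ME1} defining $c_j^k$, and verify that these coefficients annihilate the integer part up to order $n$ and the fractional part of order $\alpha$ simultaneously, so that no cross-term of a worse order survives. Once the local estimate is in hand, the remaining structure is a direct analogue of the proof of Theorem 3.6, with the $L^1$ norm of $K$ playing the same role and the constant $C$ absorbing both $C_1$ and $\norm{K(t_n-s)}_{L^1[0,t_n]}$.
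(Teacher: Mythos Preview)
Your proposal is correct and follows essentially the same route as the paper: split the error over the partition, assert the local residual bound $C_1\max(\tau_k^{\gamma},\tau_k^{n+1})$ coming from the fractional Taylor expansion of \cite{Usero2008}, pull the constant outside, and absorb $\int_0^{t_n}K(t_n-s)\,ds$ into $C$ via $K\in L^1[0,T]$. If anything you are slightly more explicit than the paper, which simply asserts the local estimate without isolating the integer and fractional pieces; your identification of that step as the one requiring care is accurate, since the paper does not spell it out either.
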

The estimates provided above suggest that the $O(\tau^{n+1})$ term is not attainable directly from the fractional Taylor series expansion. We present an example demonstrating that the kernel $K$ improves this estimate accordingly. 
\begin{example}
Let $K(t) = t^{\alpha-1}$ for $0<\alpha < 1$ and consider an order $\alpha$ approximation to $f(s)$ defined . Then, we define:
\begin{align}
&\lvert R_n\rvert := \int_0^{t_n} (t_n-s)^{\alpha-1}\lvert f(s)-f(t_k)\rvert \, ds \\
&= \sum_{k=1}^{n} \int_{t_{k-1}}^{t_k} \left|\dfrac{(s-t_{k-1})^{\alpha}-\tau_k^{\alpha}}{\Gamma(\alpha+1)} + O\left(f^{(2\alpha)}\right) \right|(t_n-s)^{\alpha-1}\, ds\\
&\leq  \sum_{k=1}^{n} \int_{t_{k-1}}^{t_k} \dfrac{\tau_k^{\alpha}(t_n-s)^{\alpha-1}}{\Gamma(\alpha+1)}\, ds\\
&\leq \dfrac{\tau_n^{\alpha}}{\Gamma(\alpha+1)} \max_{1\leq k \leq n}\tau_k^{\alpha}\\
&=C\tau^{2\alpha},
\end{align}
which is attained under a uniform mesh size $\tau_k = \tau$. However, if $2\alpha >1$, we would directly contradict the usual Taylor series expansion, therefore we obtain the secondary estimate of $C\tau$, since then it is the maximum of that and $C\tau^{2\alpha}$. 
\end{example}
\subsection{Implementation}
Based on the convergence results, we now present a discretization that uses the scheme \eqref{scheme2}. Consider the Volterra equation of the second kind
\begin{align}
u(t) &= f(t) +\int_0^t u(s)K(t-s)\, ds\\
u(0) &= 0,\quad t\in [0,T],
\end{align}
which is to be discretized. By applying \eqref{scheme2}, we then are numerically solving for $u(t_n)$ at each $t_n$ in a uniform mesh where $t_n \in [0,T]$. That is, we wish to solve
\begin{align}
u(t_n) &= f(t_n) +\sum_{k=0}^{n}\tilde{w}_k^{\gamma} u(t_k) + O(\epsilon)\\
u(t_0) &= 0,\quad \forall\, t_n\in [0,T].
\end{align}
To solve for each $u(t_n)$ then, after omitting the $O(\epsilon)$ term and by combining the $u(t_n)$ terms, we can rewrite the equation as
\begin{align}
(1-\tilde{w}_n^{\gamma})u(t_n) &\approx f(t_n) +\sum_{k=0}^{n-1}\tilde{w}_k^{\gamma} u(t_k)\\
u(t_n) &\approx (1-\tilde{w}_n^{\gamma})^{-1}\left(f(t_n) +\sum_{k=0}^{n-1}\tilde{w}_k^{\gamma} u(t_k)\right).
\end{align}
which is invertible provided that $\tilde{w}_n^{\gamma} \neq 1$. Our numerical examples will preserve this condition by selecting various values of the parameter $\alpha$, which is often associated with the order of fractional derivative or integral.
\section{Numerical Examples}
We present three main numerical examples that illustrate our findings: one equation has a kernel that is not weakly singular, another equation has a kernel that is weakly singular, and the last one has a weakly singular kernel function with minimal regularity of the solution. Our first example is a Volterra equation of the second kind with kernel $K(t) = t^{\alpha}$
\begin{align}
u(t) &= f(t) +\int_0^t u(s)(t-s)^{\alpha}\, ds\\
u(0) &= 0,\quad \forall\, t\in [0,T],
\end{align}
with the exact solution $u(t) = t^3$. We remark that the choice of kernel makes the integral a conformal integral equation as presented in section 1. We define $N$ to be the number of uniform partitions of the time domain, $E_{1,\infty}(N)$ to be the maximum error attained over the total mesh for the first example, and $\text{rate}_{1} =\log_2\left(\dfrac{E_{1,\infty}(N/2)}{E_{1,\infty}(N)}\right)$. For this first example, we will take $\alpha = 0.05, 0.25, 0.5, 0.75, 0.95$. Therefore, when $T=1$, the numerical results for a mesh size up to $N=160$ are as follows for the third order accurate scheme:
\\
\\
\renewcommand{\arraystretch}{0.35}
\begin{tabular}{ |p{3cm}||p{3cm}|p{3cm}|p{3cm}|  }
 \hline
 \multicolumn{4}{|c|}{Numerical Error for $u(t)=t^3$, T=1 on a Uniform mesh} \\
 \hline
 $\alpha$ & N & $E_{1,\infty}(N)$ & $\text{rate}_{1}$\\
 \hline
  $0.05$   & 10    &$0.0004$&  *\\
 &20 &$4.7138$e--5 &$2.9761$ \\
 &40 &$5.9554$e--6 &$2.9846$ \\
 &80 &$7.4889$e--7 &$2.9914$ \\
 &160 &$9.3908$e--8	&$2.9954$ \\
 \hline
 $0.25$   & 10    &$0.0003$&  *\\
 &20 &$3.3439$e--5 &$2.9717$ \\
 &40 &$4.2312$e--6 &$2.9824$ \\
 &80 &$5.3247$e--7 &$2.9903$ \\
 &160 &$6.6793$e--8	&$2.9949$ \\
 \hline
 $0.5$   & 10    &$0.0002$&  *\\
 &20 &$2.4606$e--5 &$2.9642$ \\
 &40 &$3.1211$e--6 &$2.9789$ \\
 &80 &$3.9318$e--7 &$2.9888$ \\
 &160 &$4.9345$e--8	&$2.9942$ \\
 \hline
 $0.75$   & 10    &$0.0002$&  *\\
 &20 &$1.9572$e--5 &$2.9566$ \\
 &40 &$2.4882$e--6 &$2.9756$ \\
 &80 &$3.1377$e--7 &$2.9873$ \\
 &160 &$3.9397$e--8	&$2.9936$ \\

 \hline
 $0.95$   & 10    &$0.0001$&  *\\
 &20 &$1.688$e--5 &$2.9509$ \\
 &40 &$2.1497$e--6 &$2.9731$ \\
 &80 &$2.713$e--7 &$2.9862$ \\
 &160 &$3.4077$e--8	&$2.993$ \\

 \hline
\end{tabular}\\
\\
By instead applying the fourth order accurate scheme to the conformable integral equation, we have the following results:
\\
\\
\renewcommand{\arraystretch}{0.35}
\begin{tabular}{ |p{3cm}||p{3cm}|p{3cm}|p{3cm}|  }
 \hline
 \multicolumn{4}{|c|}{Numerical Error for $u(t)=t^3$, T=1 on a Uniform mesh} \\
 \hline
 $\alpha$ & N & $E_{1,\infty}(N)$ & $\text{rate}_{1}$\\
 \hline
  $0.05$   & 10    &$5.6542$e--5&  *\\
 &20 &$3.3089$e--6 &$4.0949$ \\
 &40 &$1.9682$e--7 &$4.0714$ \\
 &80 &$1.1798$e--8 &$4.0603$ \\
 &160 &$7.0981$e--10	&$4.055$ \\
 \hline
 $0.25$   & 10    &$3.8191$e--5&  *\\
 &20 &$1.9751$e--6 &$4.2733$ \\
 &40 &$1.1945$e--7 &$4.0475$ \\
 &80 &$7.4193$e--9 &$4.0089$ \\
 &160 &$4.8019$e--10	&$3.9496$ \\
 \hline
 $0.5$   & 10    &$2.4519$e--5&  *\\
 &20 &$1.51$e--6 &$4.0213$ \\
 &40 &$9.4203$e--8 &$4.0026$ \\
 &80 &$5.8994$e--9 &$3.9971$ \\
 &160 &$3.7104$e--10	&$3.9909$ \\
 \hline
 $0.75$   & 10    &$2.059$e--5&  *\\
 &20 &$1.2952$e--6 &$3.9907$ \\
 &40 &$8.174$e--8 &$3.986$ \\
 &80 &$5.1476$e--9 &$3.9891$ \\
 &160 &$3.3401$e--10	&$3.9459$ \\

 \hline
 $0.95$   & 10    &$1.8683$e--5&  *\\
 &20 &$1.1934$e--6 &$3.9686$ \\
 &40 &$7.586$e--8 &$3.9756$ \\
 &80 &$4.7906$e--9 &$3.9851$ \\
 &160 &$2.9573$e--10	&$4.0178$ \\

 \hline
\end{tabular}\\
\\
Our second example is a Volterra equation of the second kind with kernel $K(t) = t^{\alpha-1}$
\begin{align}
u(t) &= f(t) +\int_0^t u(s)(t-s)^{\alpha-1}\, ds\\
u(0) &= 0,\quad \forall\, t\in [0,T],
\end{align}
which has the exact solution $u(t)=t^6$. This integral equation has an integral term of the form \eqref{CapFI}, and is studied in \cite{DavisX} as a part of a partial differential equation with a fractional integral term. For this second example, we will take $\alpha = 0.1, 0.4, 0.5, 0.7, 0.9$ to ensure that the invertible criterion $\tilde{w}_n^{\gamma} \neq 1$. By using similar definitions for this second example, we have the following results by using the third order accurate scheme:
\\
\\
\renewcommand{\arraystretch}{0.35}
\begin{tabular}{ |p{3cm}||p{3cm}|p{3cm}|p{3cm}|  }
 \hline
 \multicolumn{4}{|c|}{Numerical Error for $u(t)=t^6$, T=1 on a Uniform mesh} \\
 \hline
 $\alpha$ & N & $E_{2,\infty}(N)$ & $\text{rate}_{2}$\\
 \hline
  $0.1$   & 10    &$0.0016$&  *\\
 &20 &$0.0002$ &$2.6934$ \\
 &40 &$3.5321$e--5 &$2.7882$ \\
 &80 &$4.9271$e--6 &$2.8417$ \\
 &160 &$6.719$e--7 &$2.8744$ \\
 \hline
 $0.4$   & 10    &$0.1744$&  *\\
 &20 &$0.02440$ &$2.8377$ \\
 &40 &$0.0034$ &$2.831$ \\
 &80 &$0.0005$ &$2.8889$ \\
 &160 &$6.0717$e--5	&$2.9308$ \\
\hline
 $0.5$   & 10    &$0.0142$&  *\\
 &20 &$0.0020$ &$2.8014$ \\
 &40 &$0.0003$ &$2.8873$ \\
 &80 &$3.5853$e--5 &$2.9366$ \\
 &160 &$4.5967$e--6	&$2.9634$ \\
 \hline
 $0.7$   & 10    &$0.0036$&  *\\
 &20 &$0.0005$ &$2.8581$ \\
 &40 &$6.4879$e--5 &$2.9281$ \\
 &80 &$8.3172$e--6 &$2.9636$ \\
 &160 &$1.0532$e--6	&$2.9814$ \\
 \hline
 $0.9$   & 10    &$0.0018$&  *\\
 &20 &$0.0002$ &$2.8829$ \\
 &40 &$3.0942$e--5 &$2.9418$ \\
 &80 &$3.9459$e--6 &$2.9711$ \\
 &160 &$4.9816$e--7 &$2.9857$ \\
 \hline
\end{tabular}\\
\\
By applying the fourth order scheme to the second example, we have the following results:
\\
\\
\renewcommand{\arraystretch}{0.35}
\begin{tabular}{ |p{3cm}||p{3cm}|p{3cm}|p{3cm}|  }
 \hline
 \multicolumn{4}{|c|}{Numerical Error for $u(t)=t^6$, T=1 on a Uniform mesh} \\
 \hline
 $\alpha$ & N & $E_{2,\infty}(N)$ & $\text{rate}_{2}$\\
\hline
 $0.1$   & 10    &$0.0003$&  *\\
 &20 &$2.4748$e--5 &$3.6826$ \\
 &40 &$1.7872$e--6 &$3.7916$ \\
 &80 &$1.2397$e--7 &$3.8496$ \\
 &160 &$8.3889$e--9 &$3.8854$ \\
\hline
 $0.4$   & 10    &$0.0593$&  *\\
 &20 &$0.0051$ &$3.54$ \\
 &40 &$0.0004$ &$3.7111$ \\
 &80 &$2.7211$e--5 &$3.8375$ \\
 &160 &$1.8124$e--6	&$3.9083$ \\
 \hline
 $0.5$   & 10    &$0.0038$&  *\\
 &20 &$0.0003$ &$3.6983$ \\
 &40 &$2.0251$e--5 &$3.8425$ \\
 &80 &$1.3408$e--6 &$3.9168$ \\
 &160 &$8.6413$e--8	&$3.9557$ \\
 \hline
 $0.7$   & 10    &$0.0009$&  *\\
 &20 &$6.2452$e--5 &$3.7849$ \\
 &40 &$4.1996$e--6 &$3.8944$ \\
 &80 &$2.7217$e--7 &$3.9477$ \\
 &160 &$1.7289$e--8	&$3.9766$ \\
 \hline
  $0.9$   & 10    &$0.0004$&  *\\
 &20 &$2.9927$e--5 &$3.8057$ \\
 &40 &$1.9981$e--6 &$3.9048$ \\
 &80 &$1.2901$e--7 &$3.953$ \\
 &160 &$8.175$e--9	&$3.9802$ \\
 \hline

\end{tabular}\\
\\
We remark that while the selection of $\alpha$ is to preserve the invertibility condition, when we have $\alpha=0.25$, we have spurious and large blowup for small values of $N$, but as $N\rightarrow \infty$, we still exhibit the appropriate order of convergence, and hence still preserve the stability condition. For example, using the fourth order scheme for the second example, with $\alpha = 0.25$, we have the following rate of convergence for up to $N=5120$:
\\
\\
\renewcommand{\arraystretch}{0.35}
\begin{tabular}{ |p{3cm}||p{3cm}|p{3cm}|}
 \hline
 \multicolumn{3}{|c|}{Numerical rate for $u(t)=t^6$, T=1 on a Uniform mesh} \\
 \hline
 $\alpha$ & N & $\text{rate}_{2}$\\
 \hline
  $0.25$   & 10    &  *\\
 &20  &$-3.5302$ \\
 &40  &$-55.741$ \\
 &80  &$-295.64$ \\
 &160 &$126.68$ \\
 &320 &$11.106$ \\
 &640 &$4.3374$ \\
 &1280 &$3.7954$ \\
 &2560 &$3.8389$ \\
 &5120 &$3.8916$ \\
 &10240 &$3.945$ \\
 \hline
\end{tabular}\\
\\
Our third example is the Volterra equation of the second kind that is motivated by the findings in \cite{Zhang2014} and \cite{DavisX}. This particular equation is obtained by applying the Laplace transform to equation (1.2) of \cite{Zhang2014} to obtain:
\begin{align}
u(x,t) &= \phi(x)+ \int_0^t \left( f(x,s) + \dfrac{\partial^2u}{\partial x^2}(x,s)\right)\dfrac{(t-s)^{\alpha-1}}{\Gamma(\alpha)}\, ds\\
u(x,0) &= \phi(x),\quad u\vert_{\partial \Omega} =0,
\end{align}
on the interval $x\in [0,1]$, $t\in [0,1]$, which has the initial condition $\phi(x) = 0$ and the exact solution $u(x,t) =\sin(\pi x) t^{\alpha}$. We apply a fixed fourth order Laplacian operator in space as in \cite{Zhang2014} and the $\alpha$ order scheme presented in Section 3 to analyze the problem. We now define $E_{\alpha,\infty}(N)$ and $\text{rate}_{\alpha}$ analogously to the previous examples. By fixing $M=25$ space steps and consider $\alpha = 0.05,0.25,0.5,0.75, 0.75, 0.95$, we have the following numerical results:
\\
\\
\renewcommand{\arraystretch}{0.35}
\begin{tabular}{ |p{3cm}||p{3cm}|p{3cm}|p{3cm}|  }
 \hline
 \multicolumn{4}{|c|}{Numerical Error for $u(x,t)=\sin(\pi x)t^{\alpha}$, T=1 on a Uniform mesh} \\
 \hline
 $\alpha$ & N & $E_{\alpha,\infty}(N)$ & $\text{rate}_{\alpha}$\\
\hline
 $0.05$   & 10    &$0.0260$&  *\\
 &20 &$0.0250$ &$0.0551$ \\
 &40 &$0.0240$ &$0.0552$ \\
 &80 &$0.0231$ &$0.05539$ \\
 &160 &$0.0223$ &$0.0556$ \\
\hline
 $0.25$   & 10    &$0.0588$&  *\\
 &20 &$0.0481$ &$0.2878$ \\
 &40 &$0.0393$ &$0.2937$ \\
 &80 &$0.0319$ &$0.3003$ \\
 &160 &$0.0258$	&$0.3077$ \\
 \hline
 $0.5$   & 10    &$0.0528$&  *\\
 &20 &$0.0003$ &$0.6265$ \\
 &40 &$0.0216$ &$0.6618$ \\
 &80 &$0.0133$ &$0.7017$ \\
 &160 &$0.0079$	&$0.7443$ \\
 \hline
 $0.75$   & 10    &$0.0425$&  *\\
 &20 &$0.0208$ &$1.0331$ \\
 &40 &$0.0114$ &$0.8618$ \\
 &80 &$0.0062$ &$0.8904$ \\
 &160 &$0.0033$	&$0.9178$ \\
 \hline
  $0.95$   & 10    &$0.0470$&  *\\
 &20 &$0.0238$ &$0.9792$ \\
 &40 &$0.0120$ &$0.9863$ \\
 &80 &$0.0061$ &$0.9916$ \\
 &160 &$0.0030$ &$0.9953$ \\
 \hline
\end{tabular}\\
\\
Of particular interest is the cases where $\alpha \geq 1/2$, which validate the findings in Example 3.8. If we instead consider our exact solution to be $u(x,t) = \sin(\pi x) t^{1-\alpha}$ and apply the same $\alpha$ order in time scheme, we exhibit an even better convergence for the smaller values of $\alpha$ than predicted:
\\
\\
\renewcommand{\arraystretch}{0.35}
\begin{tabular}{ |p{3cm}||p{3cm}|p{3cm}|p{3cm}|  }
 \hline
 \multicolumn{4}{|c|}{Numerical Error for $u(x,t)=\sin(\pi x)t^{1-\alpha}$, T=1 on a Uniform mesh} \\
 \hline
 $\alpha$ & N & $E_{\alpha,\infty}(N)$ & $\text{rate}_{\alpha}$\\
\hline
 $0.05$   & 10    &$0.0079$&  *\\
 &20 &$0.0046$ &$0.7577$ \\
 &40 &$0.0026$ &$0.8154$ \\
 &80 &$0.0015$ &$0.8486$ \\
 &160 &$0.0008$ &$0.8705$ \\
\hline
 $0.25$   & 10    &$0.0255$&  *\\
 &20 &$0.0148$ &$0.7878$ \\
 &40 &$0.0085$ &$0.7937$ \\
 &80 &$0.0049$ &$0.8003$ \\
 &160 &$0.0028$	&$0.8077$ \\
 \hline
 $0.5$   & 10    &$0.0528$&  *\\
 &20 &$0.0003$ &$0.6265$ \\
 &40 &$0.0216$ &$0.6618$ \\
 &80 &$0.0133$ &$0.7017$ \\
 &160 &$0.0079$	&$0.7443$ \\
 \hline
 $0.75$   & 10    &$0.176$&  *\\
 &20 &$0.1199$ &$0.5537$ \\
 &40 &$0.0764$ &$0.6497$ \\
 &80 &$0.0457$ &$0.7426$ \\
 &160 &$0.0258$	&$0.8219$ \\
 \hline
  $0.95$   & 10    &$0.4123$&  *\\
 &20 &$0.2773$ &$0.5735$ \\
 &40 &$0.1687$ &$0.7172$ \\
 &80 &$0.0950$ &$0.8284$ \\
 &160 &$0.0508$ &$0.9025$ \\
 \hline
\end{tabular}\\
\\

\end{document}